%
%
%

\documentclass[12pt,leqno,twoside]{amsart}

\usepackage{amssymb,amsmath,amsthm,soul,color}

\usepackage[noadjust]{cite}

\usepackage{t1enc}

\usepackage[utf8]{inputenc}
\usepackage{indentfirst}

\usepackage{todonotes}

\usepackage[left=2.5cm, right=2.5cm, top=2cm, bottom=2cm]{geometry}

\usepackage{url}
\usepackage{soul}

\usepackage{mathrsfs}

\usepackage{enumitem,graphicx}
\usepackage[colorlinks=true,urlcolor=blue,
citecolor=red,linkcolor=blue,linktocpage,pdfpagelabels,
bookmarksnumbered,bookmarksopen]{hyperref}

\usepackage[hyperpageref]{backref}
\usepackage[metapost]{mfpic}
\usepackage[normalem]{ulem}

\linespread{1.1}

\newtheorem{Th}{Theorem}[section]
\newtheorem{Prop}[Th]{Proposition}
\newtheorem{Lem}[Th]{Lemma}

\newtheorem{Rem}[Th]{Remark}

\newenvironment{altproof}[1]
{\noindent
{\em Proof of {#1}}.}
{\nopagebreak\mbox{}\hfill $\Box$\par\addvspace{0.5cm}}

\newcommand{\wt}{\widetilde}

\newcommand{\vp}{\varphi}

\newcommand{\eps}{\varepsilon}

\newcommand{\R}{\mathbb{R}}

\newcommand{\Z}{\mathbb{Z}}

\newcommand{\cA}{{\mathcal A}}
\newcommand{\cB}{{\mathcal B}}
\newcommand{\cC}{{\mathcal C}}

\newcommand{\cI}{{\mathcal I}}

\newcommand{\cM}{{\mathcal M}}
\newcommand{\cN}{{\mathcal N}}

\newcommand{\cS}{{\mathcal S}}

\newcommand{\Om}{\Omega}

\newcommand{\weakto}{\rightharpoonup}


\numberwithin{equation}{section}

\begin{document}

\title[Multiple normalized solutions]{
	Multiple normalized solutions to a system of nonlinear Schr\"{o}dinger equations}

\author[J. Mederski]{Jaros\l aw Mederski}
\author[A. Szulkin]{Andrzej Szulkin}

\address[J. Mederski]{\newline\indent
	{Institute of Mathematics,
		\newline\indent 
		Polish Academy of Sciences,
		\newline\indent 
		ul. \'Sniadeckich 8, 00-656 Warsaw, Poland,		\newline\indent 
		and,		\newline\indent 
		Faculty of Mathematics and Computer Science,		\newline\indent 
		Nicolaus Copernicus University, 		\newline\indent 
		ul. Chopina 12/18, 87-100 Toruń, Poland 
}
}
\email{\href{mailto:jmederski@impan.pl}{jmederski@impan.pl}}

\address[A. Szulkin]{\newline\indent 
	Department of Mathematics,
	\newline\indent 
	Stockholm University,
	\newline\indent 
	106 91 Stockholm, Sweden
}
\email{\href{mailto:andrzejs@math.su.se}{andrzejs@math.su.se}}

\subjclass[2010]{Primary: 35J50, 35J60, 35Q55; Secondary: 35J20, 78A25}
\keywords{System of nonlinear Schr\"odinger equations, normalized solution, ground state, Nehari manifold, Pohozaev manifold}

\maketitle

\begin{abstract} 
We find a normalized solution $u=(u_1,\ldots,u_K)$ to the system of $K$ coupled nonlinear Schr\"odinger equations 
\begin{equation*}
	\left\{ \begin{array}{l}
		-\Delta u_i+ \lambda_i u_i = \sum_{j=1}^K\beta_{i,j}u_i|u_i|^{p/2-2}|u_j|^{p/2} \quad \mathrm{in} \ \R^3, \\
		u_i \in H^1_{rad}(\R^3), \\
		\int_{\R^3} |u_i|^2 \, dx = \rho_i^2 \quad \text{for }i=1,\ldots, K,
	\end{array} \right. 
\end{equation*}
where $\rho=(\rho_1,\ldots,\rho_K)\in(0,\infty)^K$ is prescribed, $(\lambda,u) \in\R^K\times H^1(\R^3)^K$ are the unknown and $4\leq p<6$. In the case of  two equations we show the existence of multiple solutions provided that the coupling is sufficiently large. We also show that for negative coupling there are no ground state solutions. The main novelty in our approach is that we use  the Cwikel-Lieb-Rozenblum theorem in order to estimate the Morse index of a solution as well as a Liouville-type result in an exterior domain.
\medskip

\end{abstract}

\section{Introduction}

 Consider the system of time-dependent nonlinear Schr\"odinger equations of the form
\begin{equation*}
\mathrm{i}\frac{\partial \Phi_i}{\partial t}
-\Delta \Phi_i = g_i(\Phi),\quad i=1,\ldots,K.
\end{equation*}
In this paper we look for solutions which are normalized by prescribing the $L^2$-bounds for $\Phi = (\Phi_1,\ldots,\Phi_K)$:
\[
\int_{\R^3}\left|\Phi_i(t,x)\right|^2\,dx=\rho_i^2\quad\hbox{for }i=1,\ldots,K.
\]
Here $\rho=(\rho_1,\ldots,\rho_K)\in(0,\infty)^K$ represents the total number of atoms in Bose--Einstein condensation \cite{Malomed,Frantzeskakis} or
the power supply in nonlinear optics \cite{Buryak}. For a particular power-type nonlinearity in three dimensions,
seeking for solitary wave solutions of the form $\Phi_i(x,t)=u_i(x)e^{-\mathrm{i}\lambda_i t}$ leads to the 
 system of time-independent Schr\"odinger equations
\begin{equation}\label{eq}
\left\{ \begin{array}{l}
-\Delta u_i+ \lambda_i u_i = \sum_{j=1}^K\beta_{i,j}u_i|u_i|^{p/2-2}|u_j|^{p/2} \quad \mathrm{in} \ \R^3, \\
u_i \in H^1_{rad}(\R^3), \\
\int_{\R^3} |u_i|^2 \, dx = \rho_i^2 \quad \text{for }i=1,\ldots, K,
\end{array} \right. ,
\end{equation}
where we assume $\beta_{i,i}>0$ and $\beta_{i,j}=\beta_{j,i}\in\R$ for $i,j=1,\ldots,K$.
Here 
\[
(\lambda,u)=(\lambda_1,\ldots,\lambda_K,u_1,\ldots,u_K) \in\R^K\times H^1_{rad}(\R^3)^K
\]
 are the unknown and $H^1_{rad}(\R^3)^K$ is the subspace of $H^1(\R^3)^K$ whose all components are radial. 
Note that this covers the most important example from the physical point of view which is  $p=4$; see \cite{Buryak,Malomed,Esry,LSSY} and the references therein. We look for solutions in $H^1_{rad}(\R^3)^K$ and not in $H^1(\R^3)^K$ in order to get some additional compactness for the functional $J$ defined below.

Recall that Kwong \cite{Kwong} showed that there exists a unique positive radial solution to the equation
$$-\Delta u + u=u^{p-1}.$$
After the rescaling 
$$u_{\beta_{1,1}}:=\alpha u(\gamma \cdot),$$
for suitable $\alpha,\gamma>0$ this gives a unique positive radial solution to \eqref{eq} with $K=1$.  
 In general, if $K\geq 2$, a scaling-type argument fails.
It is well known that weak solutions to \eqref{eq}  are critical points of the energy functional $J|_\cS$ given by
$$J(u):=\frac12\int_{\R^3}|\nabla u|^2\,dx-\frac1p\sum_{1\leq i, j\leq K}\beta_{i,j} \int_{\R^3}|u_i|^{p/2}|u_j|^{p/2}\,dx,\quad u\in H^1_{rad}(\R^3)^K$$
where
\begin{equation} \label{eq:S}
	\cS := \left\{ u = (u_1,\ldots,u_K) \in H^1_{rad}(\R^3)^K \ : \ \int_{\R^3} |u_i|^2 \, dx = \rho_i^2\quad\text{for } i=1,\ldots,K \right\}.
\end{equation}
The constants $\lambda_i \in \R$ in \eqref{eq} appear as the Lagrange multipliers. Similarly as for one equation, a global minimizer for $J$ restricted to $\cS$ can only exist in the  $L^2$-subcritical case ($2<p<\frac{10}{3}$), and in the $L^2$-critical case ($p =\frac{10}{3}$) provided that $\rho_i$ are sufficiently small  \cite{Stuart,Lions84,Schino,BiegMed}.  If $p>\frac{10}{3}$, then $\inf_\cS J=-\infty$ and the purpose of this work is to construct a minimax theory for $J$ on $\cS$. Therefore, from now on we assume that $\frac{10}{3}<p<6$.

Let us recall that every critical point of the functional above belongs to $W^{2,q}_\textup{loc}(\R^3)^K$ for all $q < \infty$ and satisfies the Poho\v{z}aev \cite{BrezisLiebCMP84} and the Nehari identities
$$
\int_{\R^3} |\nabla u|^2 \, dx = \frac6p\sum_{1\leq i, j\leq K}\beta_{i,j} \int_{\R^3}|u_i|^{p/2}|u_j|^{p/2}\,dx - 3 \sum_{i=1}^K \lambda_i \int_{\R^3}|u_i|^2 \, dx
$$
and
$$
J'(u)[u]+\sum_{i=1}^K\lambda_i \int_{\R^3} |u_i|^2 \, dx = 0.
$$
By a linear combination of the two equalities above it is easily checked that every solution $u$ satisfies the {\em Nehari-Poho\v{z}aev identity}
\begin{equation} \label{eq:M}
M(u):= \int_{\R^3} |\nabla u|^2 \, dx - \frac{3(p-2)}{2p}\sum_{1\leq i, j\leq K}\beta_{i,j} \int_{\R^3}|u_i|^{p/2}|u_j|^{p/2}\,dx = 0.
\end{equation}
In a usual way (cf. e.g. \cite{BartschJS2016,BartschSoaveJFA,MederskiSchino}) we introduce the constraint
\begin{equation} \label{eq:cM}
\cM := \left\{ u \in H^1_{rad}(\R^3)^K \setminus \{0\}:  M(u)=0 \right\}
\end{equation}
which contains all nontrivial solutions to \eqref{eq} and does not depend on $\lambda=(\lambda_1,\ldots,\lambda_K)$.

In the case of two equations Bartsch, Jeanjean and Soave proved in \cite{BartschJS2016} that for $p=4$ and every sufficiently small or sufficiently large $\beta :=\beta_{1,2}= \beta_{2,1}>0$  there exists a  solution  $(u,\lambda)\in\cS\times(0,\infty)^2$ to \eqref{eq}. See also \cite{BartschZhongZou} for an extension and \cite{BartschJeanjean,LiZou} for more general exponents, or \cite{BartschSoaveJFA,BartschSoaveJFACorr} for $\beta<0$.  Moreover, each component of $u$ is positive, radial, radially nonincreasing, of class $\cC^2$ and
$$J(u)=\inf_{\cS\cap\cM}J$$
provided that $\beta>0$ is  sufficiently large -- see \cite{MederskiSchino} where also a more general nonlinearity has been considered.

While there are several multiplicity results concerning normalized solutions for a single equation (under different assumptions on the right-hand side), the problem of multiple normalized solutions to \eqref{eq} with $K\ge 2$ has been little investigated. To our knowledge there are only a few results, even in the case $p=4$ which is most interesting from the point of view of physics. Here we can mention that
a second positive solution to \eqref{eq} has been found by Gou and Jeanjean \cite{GouJeanjean} for $\beta>0$ small and $K=2$. Next, Bartsch and Soave \cite{BartschSoaveCalV} observed that the system \eqref{eq} with $K=2$ is invariant with respect to the involution $(u_1,u_2)\mapsto (u_2,u_1)$ provided that $\rho_1=\rho_2$, $\beta_{1,1}=\beta_{2,2}$ and $p=4$. Inspired by \cite{DancerWeiWeth} they showed that for any $\beta<-\beta_{1,1}=-\beta_{2,2}$ the problem \eqref{eq} has infinitely many pairs $(u_1,u_2)$, $(u_2,u_1)$ ($u_1\ne u_2$) of positive radial solutions  in this particular situation.

The main purpose of this work is to show that if $K=2$ and $m\ge 1$, then for  $\beta_{1,2}= \beta_{2,1}$ large enough there exist at least $m$ distinct solutions to \eqref{eq}. We impose no restrictions on $\rho_i$ and $\beta_{i,i}>0$.

 We call a solution to \eqref{eq} a \emph{ground state} if it minimizes $J$ on $\cS\cap\cM$. 

Now we state the main result of this paper.

\begin{Th}\label{th:mainK2} 
Let $4\leq p<6$ and suppose that $K=2$. For any $m\geq 1$ there is a  constant $\beta_m>0$ such that if 
	$\beta := \beta_{1,2} = \beta_{2,1}>\beta_m$,
	then \eqref{eq} either has infinitely many solutions or at least $m$ solutions $u^j\in H^1_{rad}(\R^3)^2$ such that $J(u^1)<J(u^2)<\cdots<J(u^m)$. One of these solutions has all components positive and is a ground state.
\end{Th}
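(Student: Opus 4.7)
\emph{Framework on $\cS \cap \cM$.}
By the Poho\v{z}aev--Nehari identity recalled in the excerpt, every nontrivial solution lies on $\cM$, and on $\cM$ we have $J(u) = \frac{3p-10}{6(p-2)}\|\nabla u\|_2^2$, so $J$ is positive and controls $\|\nabla u\|_2$ there. The $L^2$-preserving dilation $u_\sigma(x) := \sigma^{3/2} u(\sigma x)$ maps $\cS$ into itself and, on setting $N(u) := \sum_{i,j}\beta_{i,j}\int |u_i|^{p/2}|u_j|^{p/2}\,dx$, the equation $M(u_\sigma)=0$ admits a unique solution $\sigma(u)>0$ whenever $N(u)>0$, giving the closed form
\[
J\bigl(u_{\sigma(u)}\bigr) \ = \ C_p\, \|\nabla u\|_2^{\,6(p-2)/(3p-10)} \, N(u)^{-4/(3p-10)}.
\]
It follows that $\cS \cap \cM$ is a $C^1$ Hilbert manifold, critical points of $J|_{\cS\cap\cM}$ are free critical points of $J|_\cS$ with Lagrange multipliers $\lambda_1,\lambda_2$, and the radial compactness $H^1_{rad}(\R^3) \hookrightarrow L^p(\R^3)$ for $2<p<6$ provides sequential continuity of the nonlinear term along bounded sequences.

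\emph{Symmetric minimax and large-$\beta$ estimates.}
Without assumptions on $\rho_i$ or $\beta_{i,i}$, the only robust symmetry is the antipodal action $(u_1,u_2)\mapsto(-u_1,-u_2)$, under which $\cS$, $\cM$ and $J$ are invariant. Writing $\gamma$ for the Krasnoselskii $\Z_2$-genus, I set
\[
c_k(\beta) \ := \ \inf_{A \in \Gamma_k}\ \sup_{u \in A}\ J(u),\qquad \Gamma_k \ := \ \bigl\{A \subset \cS \cap \cM : A = -A \text{ compact},\ \gamma(A) \ge k\bigr\},
\]
so $0 < c_1(\beta) \le c_2(\beta) \le \cdots$, $c_1(\beta) = \inf_{\cS\cap\cM}J$, and a standard equivariant deformation on $\cS \cap \cM$ produces a Palais--Smale sequence at each level. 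To push the higher levels down for large $\beta$, I fix linearly independent positive radial profiles $\phi_1,\ldots,\phi_k \in H^1_{rad}(\R^3)$, set $V_k := \mathrm{span}(\phi_1,\ldots,\phi_k)$, and consider the antipodal continuous map
\[
S(V_k) \ni \phi \ \longmapsto \ \bigl(\rho_1 \phi/\|\phi\|_2,\ \rho_2 \phi/\|\phi\|_2\bigr)_{\sigma(\cdot)} \ \in \ \cS \cap \cM,
\]
whose image has genus $\ge k$. Because the $\phi_i$ are positive, the coefficient of $\beta := \beta_{1,2}$ in $N(u)$ is bounded below by a positive constant uniformly on the image while $\|\nabla u\|_2$ stays bounded, so the closed form above forces $\sup J \to 0$ as $\beta \to \infty$. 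Hence $c_m(\beta)\to 0$, and for every $m$ one can find $\beta_m$ such that $c_1(\beta),\ldots,c_m(\beta)$ all sit below whatever threshold is needed for the compactness argument.

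\emph{Compactness, Morse index, and $m$ distinct solutions (main obstacle).}
The decisive step is to turn each $c_k$ into a critical point and to ensure that these critical points are genuinely distinct. Boundedness of a Palais--Smale sequence $(u^n)$ at level $c_k$ follows from $J \asymp \|\nabla u\|_2^2$ on $\cM$; radial compactness extracts a weak $H^1$-limit $u^\infty$, and the Nehari and Poho\v{z}aev identities identify the Lagrange multipliers $\lambda_i^\infty$ in the limit, so that $u^\infty$ solves the Euler--Lagrange system. The subtle possibility is \emph{partial vanishing} of one component, i.e.\ $u_i^n \rightharpoonup 0$ while the other component converges to a nonzero limit; this is exactly where I would invoke the Liouville-type theorem in an exterior domain advertised in the abstract to exclude such limiting profiles and thereby recover the full mass constraint $u^\infty \in \cS$. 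With compactness in hand, the Cwikel--Lieb--Rozenblum estimate applied to the linearized Jacobi operator $-\Delta + V_{u^\infty}$ yields the Morse-index upper bound
\[
\mathrm{ind}(u^\infty) \ \le \ C \int_{\R^3} |(V_{u^\infty})_-|^{3/2}\,dx,
\]
which, combined with the genus-based lower bound at the minimax level $c_k$, allows me to distinguish the critical points obtained at different levels. The standard genus dichotomy then closes the argument: if $c_1(\beta) < c_2(\beta) < \cdots < c_m(\beta)$ strictly we obtain $m$ distinct solutions $u^1,\ldots,u^m$ with $J(u^1) < \cdots < J(u^m)$, the lowest being the positive ground state of \cite{MederskiSchino}; if instead $c_k = c_{k+1}$ for some $k$, the critical set at that common level has genus $\ge 2$ and is therefore infinite. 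The two hardest ingredients, as flagged by the authors, are the Liouville exclusion of partial vanishing and matching the Morse-index bounds coming from genus and from CLR.
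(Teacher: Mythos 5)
Your overall skeleton matches the paper's: genus-based minimax levels $c_k(\beta)$ on $\cS\cap\cM$ via the fiber map $u\mapsto s_u\star u$, test sets built from spheres in finite-dimensional subspaces to show $c_m(\beta)\to 0$ as $\beta\to\infty$, and the standard genus dichotomy (distinct levels give $m$ solutions; a repeated level gives a critical set of genus $\ge 2$, hence infinitely many solutions). However, the step you yourself flag as decisive --- compactness --- is handled incorrectly, in two ways. First, you claim the Liouville-type theorem excludes partial vanishing of a component and ``recovers the full mass constraint.'' It cannot: semitrivial solutions $(0,v_2)$ genuinely exist, so no Liouville argument rules out weak limits with a vanishing component. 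What actually excludes them in the paper is an energy comparison: one must introduce the threshold $c_0$, the smaller of the two single-equation (Kwong) ground-state energies, choose $\beta_m$ so that $c_m(\beta)<c_0$, and observe that if the weak limit had $v_1=0$ then $(0,v_2)$ would solve the scalar problem at level $c_j<c_0$, contradicting the definition of $c_0$. Your phrase ``below whatever threshold is needed'' skips precisely this identification, which is the reason the large-$\beta$ hypothesis enters the multiplicity statement at all.

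Second, you misassign the roles of the Cwikel--Lieb--Rozenblum bound and the Liouville lemma. They are not used to ``distinguish critical points obtained at different levels'' (critical points at distinct energy levels are automatically distinct, and when levels coincide the genus of the critical set does the work). In the paper they are used \emph{inside} the Palais--Smale analysis: one must show that a nonzero weak-limit component $u_i$ forces $\lambda_i\ne 0$, since only then does weak convergence upgrade to strong $L^2$ convergence and restore $|u_i|_2=\rho_i$. Assuming $\lambda_1=0$, CLR bounds the Morse index of the partial linearization, hence (via the Benci--Fortunato argument) the number of nodal domains of $u_1$; so $u_1$ has constant sign outside a ball, the decay $u_i(x)=o(|x|^{-1})$ makes the potential $V(x)=o(|x|^{-2})$ (this is where $p\ge 4$ is needed), and the exterior Liouville lemma forces $u_1=0$, a contradiction. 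Without this step your argument does not yield $u^\infty\in\cS$ even after the semitrivial case is excluded. A further, more technical omission: since the Palais--Smale condition only holds along sequences approaching $\cM$, the deformation lemma must be localized to a neighborhood of $\cM$ (the paper modifies the cutoff sets in Willem's construction); a ``standard equivariant deformation on $\cS\cap\cM$'' as you invoke it is not directly available.
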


 Note that the system \eqref{eq} is $\Z_2^K$-invariant where $\Z_2 := \{\pm 1\}$, i.e. if $u = (u_1,\ldots, u_K)$ is a solution to \eqref{eq}, then so are $(\pm u_1,\ldots, \pm u_K)$ (any combination of signs). These solutions will be called the $\Z_2^K$-\emph{orbit} (or simply the \emph{orbit}) of $u$. In particular, there are at least $m$ different $\Z^2_2$-orbits of solutions in Theorem \ref{th:mainK2}.

For general systems with $K\ge 2$  and $\beta_{i,j}>0$ we are only able to prove the existence of one solution under rather restrictive assumptions.

\begin{Th}\label{th:mainK}  
Let $4\leq p\le\frac{14}3$ and $K\geq 2$. If
	\begin{equation}\label{eq:betacond}
\max_{\substack{\cI\subset\{1,\ldots,K\}, \\1\leq |\cI|\leq K-1}}\Big(\max_{i\in\cI}\{\beta_{i,i}\rho_i^{\frac{6-p}2}\}+\frac{|\cI|-1}{|\cI|^{\frac{3p-10}{4}}}\max_{\substack{i,j\in\cI \\i\neq j}}\big\{\beta_{i,j}(\rho_i\rho_j)^{\frac{6-p}4}\big\}\Big)<\frac{\sum_{1\leq i, j\leq K}\beta_{i,j}(\rho_i\rho_j)^{p/2}}{\big(\sum_{i=1}^K\rho_i^2\big)^{\frac34(p-2)}}
\end{equation}
and $\beta_{i,j}>0$ for all $i,j$, then \eqref{eq} has a solution $u\in H^1_{rad}(\R^3)^K$ such that $J(u)=\inf_{\cS\cap \cM}J$. So $u$ is a ground state and we may assume all its components are positive.
\end{Th}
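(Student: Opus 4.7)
The plan is to realize a ground state as a minimizer of $J$ on the constraint $\cS\cap\cM$ and to recover \eqref{eq} via the Lagrange multiplier rule. First I would verify that $\cM$ is a natural constraint for $J|_\cS$: for $u\in\cS$ and $t>0$, the dilation $u_t(x):=t^{3/2}u(tx)$ preserves all $L^2$-norms, so $u_t\in\cS$, and one computes
\[
J(u_t)=\frac{t^2}{2}\int_{\R^3}|\nabla u|^2\,dx-\frac{t^{3(p-2)/2}}{p}\sum_{1\leq i,j\leq K}\beta_{i,j}\int_{\R^3}|u_i|^{p/2}|u_j|^{p/2}\,dx.
\]
Because $\beta_{i,j}>0$ and $3(p-2)/2>2$ for $p>10/3$, this function has a unique positive maximum at $t=t(u)$, characterized by $u_{t(u)}\in\cM$; the implicit function theorem makes $u\mapsto t(u)$ smooth and $u\mapsto u_{t(u)}$ a homeomorphism $\cS\to\cS\cap\cM$. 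Combining $M(u)=0$ with a Gagliardo--Nirenberg bound on the nonlinearity yields a uniform lower bound $\int_{\R^3}|\nabla u|^2\,dx\geq\delta>0$ on $\cS\cap\cM$, hence $c:=\inf_{\cS\cap\cM}J>0$, and any minimizing sequence is $H^1$-bounded.

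Next I take a minimizing sequence $(u^n)\subset\cS\cap\cM$. By the compact embedding $H^1_{rad}(\R^3)\hookrightarrow L^q(\R^3)$ for $2<q<6$, passing to a subsequence gives $u^n\weakto u^\infty$ in $H^1_{rad}(\R^3)^K$ and $u^n\to u^\infty$ strongly in $L^p(\R^3)^K$, so every coupling integral passes to the limit. A Brezis--Lieb splitting then yields
\[
J(u^n)=J(u^\infty)+\tfrac{1}{2}\int_{\R^3}|\nabla(u^n-u^\infty)|^2\,dx+o(1),
\]
\[
M(u^n)=M(u^\infty)+\int_{\R^3}|\nabla(u^n-u^\infty)|^2\,dx+o(1),
\]
so $J(u^\infty)\leq c$ and $M(u^\infty)\leq 0$.

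It remains to exclude vanishing components and loss of $L^2$ mass, which is where \eqref{eq:betacond} enters. Set $\cI:=\{i:u^\infty_i\not\equiv 0\}$. The coercivity estimate rules out $\cI=\emptyset$. If $\cI$ were a proper nonempty subset, then $u^\infty$ (after one more fiber rescaling) would be admissible for the subsystem indexed by $\cI$, so $J(u^\infty)$ would be bounded below by the subsystem ground state level $c(\cI)$. On the other hand, the fiber-max formula for $J$ has a closed form proportional to $A^{3(p-2)/(3p-10)}B^{-4/(3p-10)}$, where $A=\int_{\R^3}|\nabla u|^2\,dx$ and $B$ is the total coupling integral, so evaluating on a common-profile ansatz $u_i=(\rho_i/\|\phi\|_{L^2})\phi$ produces an explicit upper bound for $c$ in terms of the RHS of \eqref{eq:betacond}; the analogous Gagliardo--Nirenberg estimate for the subsystem on $\cI$, together with Cauchy--Schwarz on the cross terms, gives a lower bound for $c(\cI)$ in terms of the LHS. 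Thus \eqref{eq:betacond} forces $c<c(\cI)$ for every proper $\cI$, a contradiction, so $\cI=\{1,\ldots,K\}$; an analogous strict monotonicity in each $\rho_i$ rules out partial mass loss, so $u^\infty\in\cS$. Combined with $J(u^\infty)\leq c$ and $M(u^\infty)\leq 0$, the fiber-max property forces $M(u^\infty)=0$ and $J(u^\infty)=c$. The Lagrange multiplier rule then yields $\lambda\in\R^K$ making $u^\infty$ a solution of \eqref{eq}, and since $\beta_{i,j}>0$ we may replace each $u^\infty_i$ by $|u^\infty_i|$ without increasing $J$; the strong maximum principle delivers strict positivity. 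The main obstacle is the comparison step: the peculiar exponents in \eqref{eq:betacond} reflect the precise combination of fiber-max scaling with Gagliardo--Nirenberg, and the restriction $p\leq 14/3$ appears to be exactly what keeps the resulting interpolation sharp enough to convert \eqref{eq:betacond} into a strict gap between $c$ and every subsystem level.
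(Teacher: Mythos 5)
Your reduction of existence to the strict inequality $\inf_{\cS\cap\cM}J<c_0$, where $c_0$ is the smallest subsystem level, is exactly the paper's strategy: the upper bound via the common-profile ansatz $u_i=\rho_i v$ composed with the fiber map $s\star u$ is Lemma \ref{lem:c_0_general} (with $m=1$), and the lower bound for the subsystem levels via Gagliardo--Nirenberg plus power-mean inequalities on the cross terms is Lemma \ref{lem32}; you also correctly locate where $p\le\frac{14}{3}$ enters. However, there is a genuine gap at the compactness step. Ruling out components that vanish identically in the weak limit is not the same as ruling out partial loss of $L^2$ mass: since $H^1_{rad}(\R^3)\hookrightarrow L^2(\R^3)$ is \emph{not} compact, you may have $u_i^n\weakto u_i^\infty\ne 0$ with $|u_i^\infty|_2<\rho_i$ for every $i$, in which case $u^\infty\notin\cS$ and your comparison with subsystem levels says nothing. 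You dispose of this with ``an analogous strict monotonicity in each $\rho_i$,'' but that is precisely the step that fails for systems: the fiber-max value is proportional to $A^{3(p-2)/(3p-10)}B^{-4/(3p-10)}$ and rescaling each component by $\rho_i/\sigma_i\ge 1$ increases \emph{both} $A$ and $B$, so the sign of the change is indeterminate; the paper explicitly notes that scaling-type arguments fail for $K\ge 2$.

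The paper closes this gap by an entirely different mechanism, which is its main technical contribution (Theorem \ref{th:PS}): one passes to a Palais--Smale sequence (so Lagrange multipliers $\lambda^n\to\lambda$ exist), and shows that if $u_i^\infty\ne 0$ then $\lambda_i\ne 0$, which immediately upgrades the weak convergence of that component to strong $L^2$ convergence. The proof that $\lambda_i\ne 0$ uses a Morse-index bound on the number of nodal domains via the Cwikel--Lieb--Rozenblum theorem, followed by a Liouville-type lemma in an exterior domain (Lemma \ref{lemLu}) and unique continuation; this is also where the hypothesis $p\ge 4$ is used (to get $V(x)|x|^2\to 0$). Without this argument, or a genuine proof of strict monotonicity of the ground-state level in $\rho$ for the coupled system, your proposal does not yield a minimizer in $\cS$. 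A secondary, fixable issue: a bare minimizing sequence does not come with multipliers, so you need Ekeland's variational principle (or the paper's deformation argument for $\wt J$) before you can even speak of $\lambda$ in the limit.
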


Here $|\cI|$ is the cardinality of the set $\cI$. Note  that if $K=2$, then $|\cI|-1=0$ and \eqref{eq:betacond} holds for $\beta:=\beta_{1,2}=\beta_{2,1}$ sufficiently large.
For general $K\ge 2$ the following is true.

\begin{Prop}
A sufficient condition for \eqref{eq:betacond} to hold is that $4\le p\le \frac{14}3$, $\rho_i=\rho$ for all $i$, $\beta_{i,j} = \beta$ for all $i\ne j$ and $\beta$ is sufficiently large.
\end{Prop}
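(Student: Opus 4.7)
My plan is to substitute the symmetry assumptions directly into \eqref{eq:betacond}, pull out a common factor $\rho^{(6-p)/2}$ on both sides, and then verify that the coefficient of $\beta$ on the right-hand side strictly dominates that on the left. Writing $B^\ast:=\max_i \beta_{i,i}$ and $\alpha:=(3p-10)/4\in[1/2,1]$, a direct computation (using $p-3(p-2)/2=(6-p)/2$) shows the right-hand side of \eqref{eq:betacond} simplifies to
\[
\rho^{(6-p)/2}\cdot\frac{\sum_{i=1}^{K}\beta_{i,i}+K(K-1)\beta}{K^{3(p-2)/4}},
\]
while for $\cI\subset\{1,\ldots,K\}$ of cardinality $k\ge 2$ the bracket on the left becomes $\rho^{(6-p)/2}\bigl(\max_{i\in\cI}\beta_{i,i}+\tfrac{k-1}{k^\alpha}\beta\bigr)$, and for $k=1$ it equals $\rho^{(6-p)/2}\max_{i\in\cI}\beta_{i,i}$.

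Next I would maximize this bracket over admissible $\cI$. Since $\max_{i\in\cI}\beta_{i,i}$ is largest precisely when $\cI$ contains an index achieving $B^\ast$, I restrict to such $\cI$ and then optimize the factor $g(k):=(k-1)/k^\alpha$ over $1\le k\le K-1$. A direct computation gives $g'(k)=k^{-\alpha-1}\bigl((1-\alpha)k+\alpha\bigr)>0$ for $\alpha\in[0,1]$, so $g$ is increasing and the maximum is attained at $k=K-1$. Hence the left-hand side of \eqref{eq:betacond} equals $\rho^{(6-p)/2}\bigl(B^\ast+\tfrac{K-2}{(K-1)^\alpha}\beta\bigr)$, with the $\beta$-term absent for $K=2$.

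After cancelling $\rho^{(6-p)/2}$, the desired inequality becomes $B^\ast+\tfrac{K-2}{(K-1)^\alpha}\beta<K^{-3(p-2)/4}\bigl(\sum_i\beta_{i,i}+K(K-1)\beta\bigr)$. Using $3(p-2)/4-1=\alpha$, the coefficient of $\beta$ on the right reads $\tfrac{K-1}{K^\alpha}$, so the problem reduces to the strict coefficient inequality $\tfrac{K-2}{(K-1)^\alpha}<\tfrac{K-1}{K^\alpha}$ for $K\ge 3$ and $\alpha\in[1/2,1]$; granted this, \eqref{eq:betacond} holds for all sufficiently large $\beta$ (the constants $B^\ast$ and $\sum_i\beta_{i,i}/K^{3(p-2)/4}$ being fixed), and the case $K=2$ is trivial since the left $\beta$-coefficient vanishes. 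For $K\ge 3$ I would rewrite the inequality as $\bigl(1+\tfrac{1}{K-1}\bigr)^\alpha<1+\tfrac{1}{K-2}$ and invoke the elementary bound $(1+x)^\alpha\le 1+x$ valid for $x\ge 0$ and $\alpha\in[0,1]$, together with the strict inequality $\tfrac{1}{K-1}<\tfrac{1}{K-2}$. The only step that feels delicate is justifying that the max in \eqref{eq:betacond} is attained at $|\cI|=K-1$, and this boils down to the monotonicity of $g$ indicated above.
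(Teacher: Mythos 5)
Your proof is correct and follows essentially the same route as the paper: substitute the symmetric data, use the monotonicity of $t\mapsto(t-1)/t^{(3p-10)/4}$ to reduce the maximum to $|\cI|=K-1$, and then compare the coefficients of $\beta$, which leads to the inequality $(K-2)K^{\alpha}<(K-1)^{\alpha+1}$ with $\alpha=\frac{3p-10}{4}$. The only (cosmetic) difference is that you verify this last inequality via $(1+x)^{\alpha}\le 1+x$, whereas the paper uses the weighted arithmetic--geometric mean inequality; both arguments invoke $p\le\frac{14}{3}$ at exactly this point.
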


\begin{proof}
If $\rho_i=\rho$ and $\beta_{i,j}=\beta$, then \eqref{eq:betacond} can be re-written as
\begin{equation} \label{eq:ineqK}
\max_{\substack{\cI\subset\{1,\ldots,K\}, \\1\leq |\cI|\leq K-1}}\Big(\frac1\beta\max_{i\in\cI}\{\beta_{i,i}\}+\frac{|\cI|-1}{|\cI|^{\frac{3p-10}{4}}}\Big) < \frac{\frac1\beta \sum_{i=1}^K\beta_{i,i}  + K(K-1)}{K^{\frac34(p-2)}}.
\end{equation}
So the conclusion will follow if we can show that $(K-2)K^{\frac34(p-2)}(K-1)^{-\frac{3p-10}4} < K(K-1)$, i.e.
\begin{equation} \label{eq:K}
(K-2)K^{\frac{3p-10}4} < (K-1)^{\frac{3p-6}4}. 
\end{equation}
Note that here we have replaced $|\cI|$ in \eqref{eq:ineqK} with $K-1$ because the function $t\mapsto (t-1)/t^{\frac{3p-10}4}$ is increasing for $t>0$. 
By the inequality of arithmetic and geometric means we get
$$
(K-2)^{\frac{4}{3p-6}}K^{\frac{3p-10}{3p-6}}<\frac{4}{3p-6}(K-2)+\frac{3p-10}{3p-6}K\leq K-1
$$
where the second inequality is a consequence of the assumption $p\le\frac{14}3$. Clearly, this implies \eqref{eq:K}.
\end{proof}

\begin{Rem}
\emph{
We would like to mention that a variant of the condition \eqref{eq:betacond} has appeared in \cite{BartschJS2016} for a system of $K$ equations in the particular case $p=4$. However, \cite[Lemma 5.3]{BartschJS2016} is not true as stated. It will become true after replacing
the condition \cite[(1.14)]{BartschJS2016} appearing in  \cite[Theorem 1.5]{BartschJS2016}  with
\begin{equation*}
	\frac{\Big(\sum_{i=1}^ka_i^2\Big)^3}{\Big(\sum_{1\leq i,j\leq k}\beta_{ij}a_i^2a_j^2\Big)^2} < \min_{\substack{\cI\subset\{1,\ldots,k\}\\1\leq |\cI|\leq k-1}}\frac{1}{\Big(\max_{i\in\cI}\{\beta_{ii}a_i\}+\frac{k-2}{\sqrt{k-1}}\max_{i,j\in\cI,i\neq j}\{\beta_{ij}a_i^{1/2}a_j^{1/2}\}\Big)^2}.
	\end{equation*}
Under this condition \cite[Theorem 1.5]{BartschJS2016} is a special case of Theorem \ref{th:mainK} in the present paper.
}
\end{Rem}

As will be seen later, we shall give a variant of condition \eqref{eq:betacond} which implies the existence of at least $m$ solutions $u^j\in H^1_{rad}(\R^3)^K$ for any $K\geq 2$ (see Lemma \ref{lem:c_0_general} below). However, if $m\geq 2$, we are able to check this condition only for $K=2$ and it is an open question whether Theorem \ref{th:mainK2} can be generalized for systems with at least three equations.

Even for the system with $K=2$ one encounters a considerable difficulty in the variational approach because not all Palais-Smale sequences need to converge strongly. In $H^1_{rad}(\R^3)^2$, if $(u^n)$ is a bounded Palais-Smale sequence, then $u^n\rightharpoonup u=(u_1,u_2)$ after passing to a subsequence and one can show $J(u^n)\to J(u)$. However, since the embedding of  $H^1_{rad}(\R^3)^2$ into $L^2(\R^3)^2$ is not compact,  $u_1,u_2$ may not have the $L^2$-norms as required. In many cases the strong convergence can be shown at the ground state level but it is unclear what happens at higher levels. The only paper we know of where this issue could be dealt with is the already mentioned \cite{BartschSoaveCalV}. Unfortunately, the approach used there cannot be carried over to more general situations.

In order to get multiple normalized solutions to \eqref{eq} we study the energy functional $J|_\cS$ and the crucial step is to show that each Palais-Smale sequence at $c$ which approaches $\cM$ either has a convergent subsequence or converges weakly to a semitrivial solution of the equations in \eqref{eq}, see Theorem \ref{th:PS} below. The proof is based on the Morse index, Cwikel-Lieb-Rozenblum  theorem \cite{Cwikel,Lieb,Rozenblum} and a Liouville-type result in an exterior domain (see Lemma \ref{lemLu}).

\begin{Th}\label{th:PS} Let $c>0$ and $4\leq p<6$. 
Let $(u^n)\subset \cS$ be a sequence such that $J(u^n)\to c$, $J|_{\cS}'(u^n)\to 0$ and $M(u^n)\to 0$ as $n\to\infty$. Then $(u^n)$ is bounded, so $u^n\rightharpoonup u = (u_1,\ldots,u_K)$ after passing to a subsequence, $J(u)=c$ (hence $u\ne 0$) and for each $i = 1,\ldots,K$ either $u_i=0$ or $u_i^n\to u_i$ in $H_{rad}^1(\R^3)$. 
\end{Th}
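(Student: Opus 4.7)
My plan has three stages: boundedness, identification of the weak limit with its Euler-Lagrange equation, and a component-wise dichotomy in which the Morse-index / Cwikel-Lieb-Rozenblum (CLR) estimate together with the exterior-domain Liouville lemma do the hard work.

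\textbf{Boundedness and the limit system.} From the identity
\[
J(u^n) - \frac{2}{3(p-2)}\,M(u^n) = \frac{3p-10}{6(p-2)}\,\|\nabla u^n\|_2^2,
\]
whose right-hand coefficient is positive because $p>10/3$, the hypotheses give a uniform bound on $\|\nabla u^n\|_2$; combined with the fixed $L^2$-norms on $\cS$ this shows $(u^n)$ is bounded in $H^1_{rad}(\R^3)^K$. Extracting a subsequence with $u^n\rightharpoonup u$, Strauss's compact embedding $H^1_{rad}(\R^3)\hookrightarrow L^q(\R^3)$ for $2<q<6$ makes every coupling $\int|u_i^n|^{p/2}|u_j^n|^{p/2}\,dx$ strongly convergent. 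Applying the Lagrange multiplier rule to $J|_{\cS}'(u^n)\to 0$ yields $\lambda_i^n\in\R$ with
\[
-\Delta u_i^n + \lambda_i^n u_i^n = \sum_{j=1}^K \beta_{i,j}\,u_i^n|u_i^n|^{p/2-2}|u_j^n|^{p/2} + o_{H^{-1}}(1);
\]
pairing with $u_i^n$ and using the $L^2$-normalisation bounds $(\lambda_i^n)$, so after a further extraction $\lambda_i^n\to\lambda_i\in\R$ and the weak limit $u$ solves the analogous system. Writing $v_i^n := u_i^n - u_i$, subtraction and pairing with $v_i^n$ produces the crucial identity
\[
\|\nabla v_i^n\|_2^2 + \lambda_i^n\,\|v_i^n\|_2^2 = o(1).
\]

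\textbf{Dichotomy.} What needs to be excluded is loss of $L^2$-mass coupled with surviving gradient energy: an index $i$ with $u_i=0$ but $\|\nabla u_i^n\|_2\not\to 0$, or $u_i\ne 0$ but $\|u_i\|_2<\rho_i$. I would invoke a uniform bound on the Morse index of $u^n$ (inherited from the minimax class realising the level $c$) and feed it into the CLR inequality on $\R^3$ to control, in $L^{3/2}$, the negative part of the effective potential
\[
V_i^n := \lambda_i^n - \frac{p-2}{2}\sum_{j=1}^K \beta_{i,j}\,|u_i^n|^{p/2-2}|u_j^n|^{p/2}
\]
arising from the linearisation in the $i$-th component. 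This confines the attractive portion of the associated Schr\"odinger operator to a fixed ball $B_R\subset\R^3$, after which the Liouville result in the exterior domain (Lemma \ref{lemLu}) forces any remaining profile of $v_i^n$ on $\R^3\setminus B_R$ to vanish. Together with the key identity this gives $\|\nabla v_i^n\|_2\to 0$ for every $i$, and $\|v_i^n\|_2\to 0$ whenever $u_i\ne 0$. Passing to the limit in $J(u^n)$ using the strong convergence of the nonlinear couplings then yields $J(u)=c$, which in turn forces $u\ne 0$ since $c>0$.

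\textbf{Main obstacle.} Because $H^1_{rad}(\R^3)$ is not compactly embedded in $L^2(\R^3)$, a weakly null sequence may a priori retain its full $L^2$-mass $\rho_i$ and carry uniformly positive gradient energy; this ``spreading/escape to infinity'' of one component is the only possible source of non-compactness. The Morse-index / CLR step rules out concentration of negative spectrum outside a fixed ball (the ``attractive'' profile at infinity), while Lemma \ref{lemLu} kills the complementary free-particle profile. Neither ingredient suffices on its own, and orchestrating the two is the heart of the argument.
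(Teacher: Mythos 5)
Your first stage (boundedness via $J-\tfrac{2}{3(p-2)}M$, compact embedding into $L^p$, bounded multipliers $\lambda_i^n\to\lambda_i$, the limit equation, and the identity $\|\nabla v_i^n\|_2^2+\lambda_i^n\|v_i^n\|_2^2=o(1)$) matches the paper. The second stage, however, has a genuine gap: you have misidentified what must be excluded and you use the two key tools in a way that does not work. The theorem does \emph{not} exclude a component going weakly to $0$ while retaining its $L^2$-mass (the conclusion explicitly allows $u_i=0$, and gradient convergence of that component already follows from $M(u)=0$ together with $J(u^n)\to c$). The only genuine obstruction is the degenerate case $\lambda_i=0$ with $u_i\neq 0$: if $\lambda_i\neq 0$, your identity combined with the strong convergence of $\nabla u^n$ immediately gives $\|v_i^n\|_2\to 0$, so the whole difficulty is to show that $u_i\neq 0$ forces $\lambda_i\neq 0$. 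Your proposal never isolates, let alone resolves, this case.

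Moreover, the mechanism you propose is not available. There is no uniform Morse index bound for the sequence $(u^n)$: the theorem is stated for an arbitrary sequence with $J|_\cS'(u^n)\to 0$ and $M(u^n)\to 0$, with no minimax class in sight, and even in the application one would have to \emph{prove} such an inheritance. You also run the Cwikel--Lieb--Rozenblum inequality backwards: CLR bounds the number of negative eigenvalues by $\int|W^-|^{3/2}$, so a Morse index bound cannot ``control the negative part of the potential in $L^{3/2}$''. Finally, Lemma \ref{lemLu} applies to a single nonnegative $H^1$ supersolution on an exterior domain, not to ``profiles of $v_i^n$'', and to invoke it one must first know that the function has a fixed sign near infinity. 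The paper's actual argument is: assume $\lambda_1=0$ and $u_1\neq 0$; then $\partial_{u_1}J(u)=0$, each nodal domain of the \emph{limit} component $u_1$ yields a negative direction of $\partial^2_{u_1}J(u)$ (this is where $p\geq 4$ enters), CLR applied to the linearized potential $W\in L^{3/2}$ shows this Morse index is finite, hence $u_1$ has finitely many nodal domains and a constant sign outside a ball (unique continuation excluding the alternative), the radial decay gives $V(x)|x|^2\to 0$ (again using $p\geq 4$), and only then does Lemma \ref{lemLu} force $u_1=0$, a contradiction. Without this chain your proof does not close.
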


We emphasize that although in this paper we mainly focus on the case $\beta_{i,j} > 0$, the result above holds true regardless of the signs of $\beta_{i,j}$ for $i\ne j$.

In \cite{BartschSoaveJFA} it has been shown that for $K=2$, $p=4$ and $\beta<0$ the system \eqref{eq} has a solution which is of mountain pass type. The same case has also been considered in \cite{IkomaTanaka} where (among other things) a different and simpler argument was given.  A natural question to ask is whether there also exists a ground state. We show in the next theorem that this is not the case. 
Let $u_{\beta_{i,i}}$ be the unique positive solution to
\begin{equation*}
\left\{ \begin{array}{l}
-\Delta u+ \lambda u = \beta_{i,i}u^{p-1} \quad \mathrm{in} \ \R^3, \\
u \in H^1_{rad}(\R^3), \\
\int_{\R^3} |u|^2 \, dx = \rho_i^2.
\end{array} \right.
\end{equation*}
As before, $\lambda$ appears as a Lagrange multiplier.

\begin{Th}\label{th:nonexistence}
Let $\frac{10}3< p<6$ and suppose that $K\ge 2$ and $\beta_{i,j}<0$ for all $i\neq j$. Then 
$$\inf_{\cS\cap\cM}J=\min\big\{J(0,\ldots,u_{\beta_{i,i}},\dots, 0):\; i=1,\ldots,K\big\}$$
and \eqref{eq} has no radial ground state solution, i.e.  $\inf_{\cS\cap\cM}J$ is not attained. The same holds true if $H^1_{rad}(\R^3)$ is replaced by $H^1(\R^3)$ in \eqref{eq} \emph{(}then $\cS,\cM$ are subsets of  $H^1(\R^3)^K$ instead of  $H^1_{rad}(\R^3)^K$\emph{)}.
\end{Th}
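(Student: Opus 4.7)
The plan is to reduce everything to a comparison of Dirichlet norms via the identity $J(u) = c_p\|\nabla u\|_2^2$, valid on $\cS\cap\cM$ with $c_p := \frac{3p-10}{6(p-2)} > 0$; this is obtained by eliminating the $L^p$-coupling from $J$ using $M(u) = 0$. Since the scalar ground state $u_{\beta_{i,i}}$ satisfies the analogue $M_i(u_{\beta_{i,i}}) = 0$, where $M_i(v) := \|\nabla v\|_2^2 - \frac{3(p-2)}{2p}\beta_{i,i}\|v\|_p^p$, the same identity applied to the semitrivial target $u^{(i)} := (0,\ldots,u_{\beta_{i,i}},\ldots,0)$ gives $c_i := J(u^{(i)}) = c_p\|\nabla u_{\beta_{i,i}}\|_2^2$. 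The theorem then amounts to proving $\inf_{\cS\cap\cM}\|\nabla u\|_2^2 = \min_i \|\nabla u_{\beta_{i,i}}\|_2^2$ together with non-attainment.

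For the upper bound I would fix $i$ and construct $u^n := (u_1^n,\ldots,u_K^n) \in \cS$ with $u_i^n := u_{\beta_{i,i}}$ and, for $j\neq i$, $u_j^n(x) := \rho_j\,n^{-3/2}\phi(x/n)/\|\phi\|_2$ for some fixed radial $\phi\in C_c^\infty(\R^3)\setminus\{0\}$. A direct computation gives $\|\nabla u_j^n\|_2^2 = O(n^{-2})$ and $\|u_j^n\|_p^p = O(n^{3-3p/2}) \to 0$ for every $j\neq i$ (this is where $p>2$ is used), so by H\"older every coupling integral $\int|u_k^n|^{p/2}|u_\ell^n|^{p/2}$ other than the one with $k=\ell=i$ vanishes in the limit. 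Hence $\|\nabla u^n\|_2^2\to\|\nabla u_{\beta_{i,i}}\|_2^2$, the full coupling sum tends to $\beta_{i,i}\int u_{\beta_{i,i}}^p > 0$, and in particular $M(u^n)\to 0$. Projecting onto $\cM$ via the $L^2$-preserving dilation $u^n_t(x) := t^{3/2}u^n(tx)$ then yields, for $n$ large, a unique $t_n>0$ with $u^n_{t_n}\in\cS\cap\cM$ and $t_n\to 1$, so $J(u^n_{t_n})\to c_i$.

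For the lower bound, take any $u\in\cS\cap\cM$. The sign assumption $\beta_{i,j}<0$ for $i\neq j$ enters through a pigeonhole step: combining $M(u)=0$ with the definitions yields
\[
\sum_{i=1}^K M_i(u_i) = \frac{3(p-2)}{p}\sum_{1\leq i<j\leq K}\beta_{i,j}\int_{\R^3}|u_i|^{p/2}|u_j|^{p/2}\,dx \leq 0,
\]
so $M_{i_0}(u_{i_0})\leq 0$ for some $i_0$. The scalar dilation $v_t(x) := t^{3/2}u_{i_0}(tx)$ stays in $\cS_{i_0} := \{v\in H^1_{rad}(\R^3):\|v\|_2=\rho_{i_0}\}$; using $\beta_{i_0,i_0}>0$ and $p>10/3$, the map $t\mapsto M_{i_0}(v_t)$ has a unique positive zero $t^*$, which must satisfy $t^*\leq 1$ because $M_{i_0}(v_1)\leq 0$. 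Since $v_{t^*}\in\cS_{i_0}\cap\cM_{i_0}$, and $u_{\beta_{i_0,i_0}}$ minimizes $J_{i_0}$ on this constraint set by the classical mass-supercritical theory, one obtains
\[
c_{i_0} \leq J_{i_0}(v_{t^*}) = c_p (t^*)^2 \|\nabla u_{i_0}\|_2^2 \leq c_p \|\nabla u_{i_0}\|_2^2 \leq c_p\|\nabla u\|_2^2 = J(u),
\]
and hence $J(u)\geq \min_i c_i$. The main delicate point is maintaining the direction $t^*\leq 1$ of the scalar scaling, which is exactly what lets one discard the factor $(t^*)^2$ in the upper direction.

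Finally, equality in the displayed chain would in particular force $\|\nabla u_j\|_2 = 0$ for every $j\neq i_0$, which is incompatible with $\|u_j\|_2 = \rho_j>0$ and $u_j\in H^1(\R^3)$; hence the infimum is not attained. The extension to $H^1(\R^3)^K$ is free: the construction in the upper bound already lies in $H^1_{rad}\subset H^1$, the pigeonhole step above does not use radiality, and the scalar infimum $c_i$ agrees in $H^1(\R^3)$ and $H^1_{rad}(\R^3)$ by Schwarz symmetrization.
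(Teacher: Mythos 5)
Your proposal is correct and follows essentially the same route as the paper: reduce $J$ to $c_p\|\nabla u\|_2^2$ on $\cS\cap\cM$, use $\beta_{i,j}<0$ to pigeonhole an index with $M_{i_0}(u_{i_0})\le 0$, rescale that component down onto $\cM_{i_0}$ to get the lower bound with strictness coming from the discarded gradients of the other components, and build the approximating sequence by shrinking the remaining components and reprojecting onto $\cM$. The only cosmetic difference is that the paper shrinks the other scalar ground states $s_n\star u_{\beta_{j,j}}$ where you use a rescaled bump $\phi$; both work for the same reason.
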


Note that if $\lambda_i$ are prescribed and positive while $\rho_i$ are free, then there are no ground state solutions in $H^1(\R^3)^K$ if $p=4$ and all $\beta_{i,j}<0$, $i\ne j$ \cite{LinWei}. On the other hand, in this case ground states do exist in $H^1_{rad}(\R^3)^K$ \cite{Sirakov}. 

The assumption $p\ge 4$ we have made in Theorems \ref{th:mainK2}, \ref{th:mainK} and \ref{th:PS} is essential for our arguments in Section \ref{sec:PS} (see the application of the Cwikel-Lieb-Rosenblum theorem there). However, we expect the results to be true for all $\frac{10}{3}<p<6$.

\medskip

The paper is organized as follows. In Section \ref{sec:PS} we prove Theorem \ref{th:PS}. Theorem \ref{th:mainK2} is proved in Section \ref{sec:Main2} and Theorem \ref{th:mainK} in Section \ref{sec:Main1}. The latter result will follow from a more general condition specified in Lemma \ref{lem:c_0_general}. In fact also Theorem \ref{th:mainK2} follows from this lemma but for technical simplicity we prefer to prove it separately. Nonexistence of ground states in the repulsive case $\beta_{i,j}<0$ (Theorem \ref{th:nonexistence}) is proved in Section \ref{sec:Main3}. A Liouville-type lemma which is used in Theorem \ref{th:PS} is proved in Appendix \ref{spectrum} in a more general setting than needed since we think it can be interesting in itself.

\medskip

\noindent\textbf{Notation.} $|\cdot|_k$ is the norm in $L^k(\R^3)$, $B(0,\rho)$ is the ball of radius $\rho$ and center at the origin, $\rightharpoonup$ denotes the weak convergence, $V^+ := \max\{V,0\}$, $V^- := \min\{V,0\}$.

\section{Proof of Theorem \ref{th:PS}}\label{sec:PS}

In order to study the compactness of Palais-Smale  sequences of $J$ on $\cS$ we apply the Cwikel-Lieb-Rozenblum theorem  \cite{Cwikel,Lieb,Rozenblum}. We also need a Liouville-type lemma outside a large ball. More precisely,
suppose that the potential $V$ in the  Schr\"odinger operator $-\Delta +V$ on $L^2(\R^3)$ satisfies $\lim_{|x|\to\infty}V(x)|x|^2=0$. If $u\in H^1(\R^3)$  and there is $\rho>0$ such that $u\geq 0$ and $-\Delta u+ V(x)u\geq 0$ for $|x|\geq \rho$, then 
	$\inf\{u(x): |x|=M\}=0$ for any $M\geq \rho$ (so in particular, $u(x)=0$ for $|x|\ge \rho$ if $u\in H^1_{rad}(\R^3)$). A more general version of this result is proved in Appendix \ref{spectrum} since we believe it can be interesting in itself. It extends the results in \cite{BartschSoaveJFA,Ikoma}. 
	
The main novelty here is that we show that regardless of the signs of $\beta_{i,j}$ for $i\ne j$, if
$\lambda_i=0$, then $u_i=0$ also when we a priori do not know whether
$u_i\ge 0$.
	
\medskip

\begin{altproof}{Theorem \ref{th:PS}}
	First of all, note that $M(u^n)\to 0$ implies 
	\begin{equation} \label{eq:bound}
	 \frac{3p-10}{6(p-2)}\int_{\R^3}|\nabla u^n|^2\,dx = J(u^n)-\frac{2}{3(p-2)}M(u^n) \to c > 0
	\end{equation}
	and hence $(u^n)$ is bounded in $H^1_{rad}(\R^3)^K$.
	Therefore we may assume $u^n\weakto u=(u_1,\ldots, u_K)$ in $H^1_{rad}(\R^3)^K$ and $u^n\to u$ in $L^p(\R^3)^K$. 
	Note also that 
	$$
	J'(u^n)+\lambda^n u^n=o(1)
	$$ 
	for some Lagrange multipliers $\lambda^n\in\R^K$ (here $\lambda^nu^n$ should be understood as an element of the dual space of $H^1_{rad}(\R^3)$). It follows that
	\begin{equation}\label{eq:lambda}
	-\lambda^n_i\rho_i^2=\int_{\R^3}|\nabla u^n_i|^2\,dx-\sum_{1\leq j\leq K}\beta_{i,j}\int_{\R^3}|u^n_i|^{p/2}|u^n_j|^{p/2}\,dx+o(1),
	\end{equation}
	so $(\lambda^n)$ is bounded and passing to a subsequence, $\lambda^n\to\lambda$ for some $\lambda\in\R^K$. Observe that 
	$$
	J'(u)+\lambda u=0
	$$ 
	and as a solution, $u$ satisfies the Nehari-Poho\v{z}aev identity \eqref{eq:M}. 
	Thus $M(u)=0$ and therefore
	\begin{equation} \label{eq:conv}
	\int_{\R^3}(|\nabla u^n|^2-|\nabla u|^2)\,dx=\frac{3(p-2)}{2p}\sum_{1\leq i,j\leq K}\beta_{i,j}\int_{\R^3} (|u^n_i|^{p/2}|u_j^n|^{p/2}-|u_i|^{p/2}|u_j|^{p/2})\,dx + o(1) \to 0
	\end{equation}
	as $n\to\infty$.
So $J(u)=c$  and in particular, $u\ne 0$. 
	
	We shall show that if $u_i\ne 0$, then $\lambda_i\ne 0$. Arguing by contradiction, suppose $\lambda_1=0$ and $u_1\ne 0$. Then $\partial_{u_1} J(u)=0$. Let
	$(\Om_k)_{k=1}^{m}$ be the sequence of nodal domains of $u_1$ and let $u_{1,k}:=u_1$ on $\Om_k$ and $u_{1,k}=0$ on $\R^3\setminus \Om_k$. Clearly, $u_{1,k}\in H^1_{rad}(\R^3)$. We claim $m$ must be finite. Indeed, we have
		\begin{align}
	\label{secondderivative}	& \partial^2_{u_1} J(u)[u_{1,k},u_{1,k}]=\int_{\R^3}|\nabla u_{1,k}|^2\,dx-(p-1)\beta_{1,1}\int_{\R^3}|u_{1,k}|^p\,dx \\
	\nonumber	& \hskip8cm -\frac{p-2}2\sum_{j=2}^K\beta_{1,j} \int_{\R^3}|u_{1,k}|^{p/2}|u_j|^{p/2}\,dx\\
	\nonumber	&  < \int_{\R^3}|\nabla u_{1,k}|^2 \,dx- \beta_{1,1} \int_{\R^3}|u_{1,k}|^p\,dx - \sum_{j=2}^K\beta_{1,j}\int_{\R^3}|u_{1,k}|^{p/2}|u_j|^{p/2}\,dx=\partial_{u_1} J(u)[u_{1,k}]=0
	\end{align}
	for $k=1,\ldots,m$
	where the inequality above is a consequence of the identity 
	\begin{eqnarray*}
	&&(p-2)\beta_{1,1}\int_{\R^3}|u_{1,k}|^p\,dx+\frac{p-4}2\sum_{j=2}^K\beta_{1,j}\int_{\R^3}|u_{1,k}|^{p/2}|u_j|^{p/2}\,dx\\
	&& \quad = (p-2)\beta_{1,1}\int_{\R^3}|u_{1,k}|^p\,dx+\frac{p-4}2\sum_{j=2}^K\beta_{1,j}\int_{\R^3}|u_{1,k}|^{p/2}|u_j|^{p/2}\,dx + \frac{p-4}2\partial_{u_1}J(u)[u_{1,k}] \\
	&& \quad = \frac{p-4}2\int_{\R^3} |\nabla u_{1,k}|^2\,dx + \frac p2\int_{\R^3}\beta_{1,1}|u_{1,k}|^p > 0. 
	\end{eqnarray*}
	It follows from \eqref{secondderivative} that $m\leq m(u_1)$ where  $m(u_1)$ stands for the Morse index of the second derivative of
	$$
	H^1_{rad}(\R^3)\ni v\mapsto J(v,u_2,\ldots, u_K)
	$$
	at $v=u_1$. The idea of estimating the number of nodal domains by the Morse index goes back to Benci and Fortunato \cite{benfort}, see also Bahri and Lions \cite{BahriLions}. According to the Cwikel-Lieb-Rozenblum  theorem \cite{Cwikel,Lieb,Rozenblum}, the operator 
	$$Av:=-\Delta v+W(x)v$$ 
	has discrete spectrum on the negative real line and the number of negative eigenvalues is bounded by a constant times $\int_{\R^3}|W^-|^{3/2}\,dx$ provided this integral is finite and $W\in L^1_{loc}(\R^3)$. A convenient reference to this result is \cite[Theorem 4.31]{flw}.
If we put
	$$
	W(x):=-(p-1)\beta_{1,1} |u_1|^{p-2} - \frac{p-2}2\sum_{j=2}^K\beta_{1,j} |u_1|^{p/2-2}|u_j|^{p/2},
	$$
	then $W\in L^{3/2}(\R^3)$ because $2\le\frac32(p-2)<6$. Therefore $m(u_1)$ (which equals the number of negative eigenvalues of $-\Delta+W)$ is finite and hence so is $m$ which proves our claim.
	So there is $\rho>0$ such that $u_1$ has constant sign on $\R^3\setminus B(0,\rho)$
	or $u_1=0$ on $\R^3\setminus B(0,\rho)$. If the latter case occurs, then by the unique continuation property \cite[Theorem 1.4]{GarofaloLin} we obtain $u_1=0$ which is a contradiction. So we may assume $u_1>0$ on $\R^3\setminus B(0,\rho)$. In view of \cite{Strauss}, there is a constant $C>0$ such that $|u_i(x)|\leq C/|x|$ for $i=1,\ldots,K$ and $|x|\geq 1$. In fact, as observed in \cite[Subsection 2.2 (c)]{BerLions},  $u_i(x) = o(|x|)$ as $|x|\to\infty$. This follows from the proof of \cite[Radial Lemma A.II]{BerLions} where one observes that for any $r=|x|>0$ and $v=v(r)\in \cC_{0,rad}^\infty(\R^3)$
	\begin{align*}
	r^2v(r)^2 & = -\int_r^\infty(\rho^2v(\rho)^2)'\,d\rho \le  C\|v\|_{H^1_{rad}(\R^3\setminus B(0,r))}
	\end{align*} 
	($C$ independent of $r$). More precisely, integration in \cite{BerLions} is performed from $0$ to $r$ and $r^2v(r)^2$ is estimated by the norm in $H^1_{rad}(\R^3)$ but the argument is exactly the same in our case.
	Hence setting
	$$V(x):=-\sum_{j=1}^K\beta_{1,j}|u_1|^{p/2-2}|u_j|^{p/2}$$
	we get 
$$\lim_{|x|\to\infty}V(x)|x|^2=0$$
	provided $p\geq 4$. 	
	Since $\partial_{u_1}J(u) = 0$, it follows that 
	$$-\Delta u_1+V(x)u_1=0,$$ 
	and by  Lemma \ref{lemLu} we get $u_1=0$, a contradiction again.  We have shown that $\lambda_1\neq 0$ and in a similar way we show   $\lambda_i\neq 0$ for $i=2,\ldots,K$.

It remains to show that $u_i^n\to u_i$ in $L^2(\R^3)$ for $i=1,\ldots, K$. But as $\lambda_i\ne 0$, using \eqref{eq:conv} we obtain 
	\begin{eqnarray*}
		o(1)&=&(\partial_{u_i}J(u_n)-\partial_{u_i}J(u))[u^n_i-u_i]+\int_{\R^3}(\lambda^n_i u^n_i-\lambda_i u_i)(u^n_i-u_i)\,dx\\
		&=&-\lambda_i\int_{\R^3}|u^n_i-u_i|^2\,dx+o(1)
	\end{eqnarray*}
	and the conclusion follows.
\end{altproof}

\begin{Rem}
\emph{
In fact we have shown a little more: For each $i=1,\ldots,K$ either $\lambda_i=0$ and $u_i=0$, or $\lambda_i\ne 0$ and $u_i^n\to u_i$ in $H^1_{rad}(\R^3)$. In particular, in the latter case $|u_i|_2 = \rho_i$.
}
\end{Rem}

\section{Proof of Theorem \ref{th:mainK2}} \label{sec:Main2}

In order to prove Theorem \ref{th:mainK2} we need some auxiliary results. As they will also be needed in  Theorem \ref{th:mainK}, we formulate them for a general $K$ and not separately for $K=2$.

For $u\in H^1(\R^3)^K\setminus\{0\}$ and $s>0$, define 
\[
s\star u(x):=s^{3/2}u(sx) \quad \text{and} \quad \vp(s):=J(s\star u)
\]
(this definition goes back to \cite{Jeanjean}). Then
\begin{equation} \label{sstar}
\vp(s) = J(s\star u) = \frac{s^2}2\int_{\R^3}|\nabla u|^2\,dx - \frac{s^{\frac{3(p-2)}2}}p\sum_{1\leq i, j\leq K}\beta_{i,j} \int_{\R^3}|u_i|^{p/2}|u_j|^{p/2}\,dx.
\end{equation}
Recall the following lemma.

\begin{Lem}[\cite{MederskiSchino,JeanjeanLuNorm}]\label{lem:phi}
	If $u\in H^1_{rad}(\R^3)^K\setminus\{0\}$, then
	there exists a global maximizer  $s_u>0$ for $\vp$ and $\vp$ is strictly increasing on $(0,s_u)$ and strictly decreasing on $(s_u,+\infty)$. Moreover, $s\star u\in\cM$ if and only if $s=s_u$, $M(s\star u)>0$ if and only if $s\in(0,s_u)$
	and $M(s\star u)<0$ if and only if $s>s_u$. 
\end{Lem}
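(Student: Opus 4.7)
The plan is to reduce the lemma to a one-variable calculus computation. Abbreviating
$$A := \int_{\R^3}|\nabla u|^2\,dx, \qquad B := \sum_{1\leq i,j\leq K}\beta_{i,j}\int_{\R^3}|u_i|^{p/2}|u_j|^{p/2}\,dx, \qquad q := \frac{3(p-2)}{2},$$
the identity \eqref{sstar} reads $\vp(s) = \frac{A}{2}s^2 - \frac{B}{p}s^q$. The same change of variable $y = sx$ applied to \eqref{eq:M} gives $M(s\star u) = As^2 - \frac{qB}{p}s^q$, and differentiating $\vp$ yields
$$\vp'(s) \;=\; As - \frac{qB}{p}s^{q-1} \;=\; \frac{M(s\star u)}{s} \qquad (s>0),$$
so $\vp'(s)$ and $M(s\star u)$ share the same sign throughout $(0,\infty)$ and vanish simultaneously.

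Since $u\ne 0$ we have $A>0$, and the standing assumption $p > \tfrac{10}{3}$ forces $q > 2$. In every situation in which the lemma will be invoked in this paper the coupling data forces $B>0$ as well (for instance $\beta_{i,j}>0$ for all $i,j$ as in Theorem \ref{th:mainK}, or $\beta_{1,2}$ large enough as in Theorem \ref{th:mainK2}). Granting $B > 0$, the auxiliary function
$$h(s) := A - \frac{qB}{p}s^{q-2}$$
is strictly decreasing on $(0,\infty)$ with $h(0^+) = A > 0$ and $h(s) \to -\infty$ as $s \to \infty$; consequently it has a unique zero $s_u \in (0,\infty)$, and the factorization $\vp'(s) = s\cdot h(s)$ shows that $\vp'(s) > 0$ on $(0,s_u)$ and $\vp'(s) < 0$ on $(s_u,+\infty)$.

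This is precisely the monotonicity asserted in the lemma. Together with $\vp(0) = 0$ and $\vp(s) \to -\infty$ as $s\to\infty$ (using $q > 2$ once more), it identifies $s_u$ as the unique global maximizer. The three equivalences involving $M$ then follow at once from $\vp'(s) = M(s\star u)/s$: $s\star u \in\cM$ iff $s = s_u$, $M(s\star u)>0$ iff $s\in(0,s_u)$, and $M(s\star u)<0$ iff $s>s_u$.

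The only genuine obstacle in the argument is securing $B > 0$: when some $\beta_{i,j}$ are allowed to be negative this is not automatic, and so the lemma is effectively applied under hypotheses on the signs or magnitudes of the coupling constants that force the nonlinear integral to be positive. Everything else is elementary one-variable calculus, together with the scaling identities for $\vp$ and $M$.
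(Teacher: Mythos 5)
Your argument is correct and is precisely the ``simple computation using \eqref{sstar}'' that the paper invokes without further detail (it defers to \cite{MederskiSchino,JeanjeanLuNorm}): the identity $\vp'(s)=M(s\star u)/s$ together with the strict monotonicity of $s\mapsto A-\tfrac{qB}{p}s^{q-2}$ for $q=\tfrac{3(p-2)}{2}>2$ is the entire proof. Your caveat that one needs $B>0$ is well taken --- the lemma as literally stated omits this, but it holds automatically when all $\beta_{i,j}>0$, and in Section \ref{sec:Main3}, where some $\beta_{i,j}<0$, the paper itself explicitly verifies the positivity of this integral before asserting that $s_{u^n}$ exists.
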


The proof is by a simple computation using \eqref{sstar}; in \cite{MederskiSchino,JeanjeanLuNorm} this result has been proved in a more general situation. 
It follows from Lemma \ref{lem:phi} that $\cS\cap\cM\neq\emptyset$. We show that  $\cS$ and $\cM$ are $\cC^2$-manifolds which intersect transversally. That $\cS$ is of class $\cC^2$ is obvious. It is easily seen from \eqref{eq:M} that $M$ is a $\cC^2$-functional and $M'(u)[u]\ne 0$ for $u\in\cM$. Hence $\cM$ is a $\cC^2$-manifold.  We need to show that  $\partial_{u_i}M(u)$ and $u_i$ are linearly independent for $i=1,2,\ldots,K$. Assuming the contrary, we have
$$
\partial_{u_i}M(u)+\lambda_i u_i=0\hbox{ for some }\lambda_i\in \R, \ i=1,\ldots,K
$$ 
and 
\begin{align*}
\int_{\R^3} |\nabla u|^2 \, dx &=  \frac{3(p-2)}{2p}\,  \frac{3(p-2)}{2}\Big(\sum_{1\leq i,j\leq K}\beta_{i,j} \int_{\R^3}|u_i|^{p/2}|u_j|^{p/2}\,dx\Big)\\
	& > \frac{3(p-2)}{2p} \Big(\sum_{1\leq i,j\leq K}\beta_{i,j} \int_{\R^3}|u_i|^{p/2}|u_j|^{p/2}\,dx\Big) \\
	&  = \int_{\R^3}|\nabla u|^2\,dx-M(u)=\int_{\R^3}|\nabla u|^2\,dx
\end{align*}
where the first equality above is the Nehari-Poho\v{z}aev identity for $M$.
So we obtain a contradiction which shows that  $\cS$ and $\cM$ intersect transversally and $\cS\cap\cM$ is a $\cC^2$-manifold of codimension $K+1$ in $ H^1_{rad}(\R^3)^K$.

For a closed and symmetric set $A$ in a Banach space  we define the genus $\gamma(A)$ as the smallest integer $n$ such that there exists a continuous and odd map $h:A\to\R^n\setminus\{0\}$. If no such $h$ exists, we set $\gamma(A):=\infty$, and we also define $\gamma(\emptyset)=0$. Basic properties of genus may be found e.g. in \cite{Struwe}.
In the proposition below we compute the genus of a product of spheres. The result may be known but we could not find any convenient reference to it. Let $S^{k-1}$ denote the unit sphere in $\R^k$.

\begin{Prop} \label{prodspheres}
	Let $S:= S^{m_1-1}\times\cdots\times S^{m_k-1}$. Then $\gamma(S) = \min\{m_1,\ldots,m_k\}$.
\end{Prop}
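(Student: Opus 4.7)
The plan is to establish the two inequalities separately. After relabeling I may assume $m_1=m:=\min\{m_1,\ldots,m_k\}$, since permuting the factors of $S$ yields an odd homeomorphism with respect to the diagonal $\mathbb{Z}_2$-action and the genus is invariant under such maps. Throughout, $\mathbb{Z}_2$ acts on $S$ by the diagonal antipodal map $(x_1,\ldots,x_k)\mapsto(-x_1,\ldots,-x_k)$.

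For the upper bound $\gamma(S)\le m_1$ I will simply use the projection onto the first factor. The map $\pi_1\colon S\to S^{m_1-1}\subset\mathbb{R}^{m_1}\setminus\{0\}$ is continuous and odd, so the definition of genus gives $\gamma(S)\le m_1$ at once.

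For the lower bound $\gamma(S)\ge m_1$ I will invoke the standard monotonicity property of the genus: if $A$ and $B$ are closed symmetric sets in Banach spaces and there exists a continuous odd map $A\to B$, then $\gamma(A)\le \gamma(B)$; this is immediate by composing with an odd map $B\to \mathbb{R}^n\setminus\{0\}$ realizing $\gamma(B)=n$. I will construct a continuous odd $f\colon S^{m_1-1}\to S$ by setting $f(x)=(x,\iota_2(x),\ldots,\iota_k(x))$, where each $\iota_j\colon S^{m_1-1}\to S^{m_j-1}$ is the isometric embedding obtained by padding with zeros in the last $m_j-m_1$ coordinates. This is well defined precisely because $m_1\le m_j$ for every $j$, and it is odd since each component is. Since $\gamma(S^{m_1-1})=m_1$ by the classical Borsuk--Ulam theorem (see e.g.\ \cite{Struwe}), monotonicity yields $m_1=\gamma(S^{m_1-1})\le\gamma(S)$.

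I do not anticipate a genuine obstacle here: both inequalities reduce to writing down an explicit odd map in the appropriate direction. The only point where the hypothesis $m_1=\min$ is used is in the construction of the $\iota_j$, which need to land inside spheres of at least equal dimension, and this is the reason the minimum appears in the final value.
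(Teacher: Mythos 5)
Your proof is correct and follows essentially the same route as the paper: the projection onto the first factor gives the upper bound, and the diagonal-type embedding of $S^{m_1-1}$ into $S$ (the paper writes $g(x_1)=(x_1,\ldots,x_1)$, implicitly using the same padding-with-zeros identification you spell out) combined with $\gamma(S^{m_1-1})=m_1$ and monotonicity gives the lower bound.
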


\begin{proof}
	Assume without loss of generality that $m_1\le m_j$ for $2\le j\le k$. The map 
	\[
	h: S\to S^{m_1-1}, \qquad h(x_1,\ldots,x_k) = x_1
	\]
	is odd, hence $\gamma(S) \le \gamma(S^{m_1-1}) = m_1$. Since also
	\[
	g: S^{m_1-1} \to S, \qquad g(x_1) = (x_1,\ldots, x_1)
	\]
	is odd, $\gamma(S^{m_1-1})\le \gamma(S)$.
\end{proof}

From now on and to the end of this section we assume $K=2$. 
Let
\[
\Sigma := \{A\subset \cS\cap\cM : A=-A\text{ and $A$ is compact}\}
\]
and
\begin{equation} \label{eq:cm}
c_m(\beta) := \inf_{\substack{A\in\Sigma\\\gamma(A)\ge m}}\max_{u\in A} J(u)
\end{equation}
where $\beta:= \beta_{1,2}=\beta_{2,1}$.

Consider the equation
\begin{equation} \label{eq:ui}
\left\{ \begin{array}{l}
-\Delta u_i+ \lambda_i u_i = \beta_{i,i}|u_i|^{p-2}u_i \quad \mathrm{in} \ \R^3, \\
u_i \in H^1_{rad}(\R^3), \\
\int_{\R^3} |u_i|^2 \, dx = \rho_i^2
\end{array} \right. 
\end{equation}
for $i=1,2$. As we have mentioned in the introduction, by a result of Kwong \cite{Kwong} this equation has a unique positive solution and it is known to be a ground state. Denote the smaller of the ground state energies for $i=1,2$  by $c_0$. Next we show that there exists $\beta_m>0$ such that if $\beta > \beta_m$, then $c_m(\beta)<c_0$. This is an immediate consequence of the following result.

\begin{Lem} \label{to0}
$\lim_{\beta\to \infty} c_m(\beta) = 0$.
\end{Lem}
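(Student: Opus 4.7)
The plan is to produce, for each large $\beta$, an explicit compact symmetric set $A_\beta \subset \cS \cap \cM$ of genus at least $m$ on which $J$ is uniformly small. I would fix $m$ linearly independent radial functions $w_1,\ldots,w_m \in H^1_{rad}(\R^3)$ and, for $a=(a_1,\ldots,a_m) \in S^{m-1}$, form the normalized combination
\begin{equation*}
v(a) := \frac{\sum_{i=1}^m a_i w_i}{\big|\sum_{i=1}^m a_i w_i\big|_2} \in H^1_{rad}(\R^3);
\end{equation*}
since the $w_i$ are linearly independent, the denominator is bounded away from zero on $S^{m-1}$, so $v(S^{m-1})$ is a compact subset of $H^1_{rad}(\R^3)\setminus\{0\}$. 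I would then consider the diagonal family $\widetilde A_\beta := \{(\rho_1 v(a), \rho_2 v(a)) : a \in S^{m-1}\} \subset \cS$ and project it onto $\cS\cap\cM$ by setting $A_\beta := \{s_u \star u : u \in \widetilde A_\beta\}$, where $s_u$ is the unique scaling parameter from Lemma \ref{lem:phi}.

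For the genus, set $A(u):=|\nabla u|_2^2$ and $B(u):=\sum_{i,j}\beta_{i,j}\int |u_i|^{p/2}|u_j|^{p/2}\,dx$. Differentiating \eqref{sstar} one finds the explicit formula $s_u = (2pA(u)/(3(p-2)B(u)))^{2/(3p-10)}$, which depends continuously on $u$ whenever $B(u)>0$; this holds throughout $\widetilde A_\beta$ because all $\beta_{i,j}$ are positive for large $\beta$ and $v\ne 0$. Since $A$ and $B$ are even in $u$, one has $s_{-u}=s_u$ and $s\star(-u)=-(s\star u)$, so composition with $a\mapsto (\rho_1 v(a),\rho_2 v(a))$ gives a continuous odd map $S^{m-1}\to A_\beta$. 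Genus monotonicity then yields $\gamma(A_\beta)\geq\gamma(S^{m-1})=m$.

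For the energy, a direct computation using $(u_1,u_2)=(\rho_1 v,\rho_2 v)$ gives
\begin{equation*}
A(u) = (\rho_1^2+\rho_2^2)\int|\nabla v|^2\,dx,\qquad B(u) = \big(\beta_{1,1}\rho_1^p+\beta_{2,2}\rho_2^p+2\beta(\rho_1\rho_2)^{p/2}\big)\int|v|^p\,dx.
\end{equation*}
Because $v(a)$ ranges over a compact subset of $H^1_{rad}(\R^3)\setminus\{0\}$ independent of $\beta$, both $|\nabla v|_2$ and $|v|_p$ are uniformly bounded above and away from zero on $S^{m-1}$. Hence $A(u)$ stays uniformly bounded while $B(u)\to\infty$ uniformly in $a$ as $\beta\to\infty$, and the explicit formula
\begin{equation*}
J(s_u\star u) = \vp(s_u) = \frac{3p-10}{6(p-2)}\, A(u) \cdot \Big(\frac{2p\,A(u)}{3(p-2)B(u)}\Big)^{4/(3p-10)}
\end{equation*}
(obtained by inserting $s_u$ into \eqref{sstar}) tends to $0$ uniformly in $a\in S^{m-1}$, since $3p-10>0$.

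Finally, from \eqref{eq:bound} one has $J\geq 0$ on $\cM$, so $c_m(\beta) \geq 0$; on the other hand $A_\beta \in \Sigma$ with $\gamma(A_\beta)\geq m$ gives $c_m(\beta) \leq \max_{A_\beta} J\to 0$. The one point requiring care is the continuity (and oddness) of $u\mapsto s_u\star u$ used in the genus bound, but with all $\beta_{i,j}>0$ and $v\ne 0$ this is immediate from the explicit formula for $s_u$; everything else is routine scaling analysis.
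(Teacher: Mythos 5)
Your proof is correct and follows essentially the same route as the paper: a genus-$m$ diagonal family $(\rho_1 v,\rho_2 v)\subset\cS$ is projected onto $\cM$ via $u\mapsto s_u\star u$, and the large coupling term forces $\max J\to 0$ at the rate $\beta^{-4/(3p-10)}$. The only cosmetic differences are that you parametrize by an odd embedding of $S^{m-1}$ (making the genus bound immediate) where the paper takes a product of spheres and invokes Proposition \ref{prodspheres}, and that you evaluate $J(s_u\star u)$ exactly rather than bounding $\sup_{s>0}J(s\star u)$ from above.
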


\begin{proof}
Let $A_1\subset H^1_{rad}(\R^3)$ be an $(m-1)$-dimensional sphere such that $|u_1|_2 = \rho_1$ for all $u_1\in A_1$ and let $\rho:= \rho_2/\rho_1$. Then $A:= A_1\times\rho A_1\subset\cS$  and $\gamma(A)=m$ according to Proposition \ref{prodspheres}.  For $u=(u_1,\rho u_1)\in A$ we have
\[
J(s\star u) \le \frac{(1+\rho^2)s^2}{2}\int_{\R^3}|\nabla u_1|^2\,dx - \beta\rho^{p/2}s^{3(p-2)/2}\int_{\R^3}|u_1|^{p}\,dx
\]
(cf. \eqref{sstar}). Since the first integral on the right-hand side above is bounded and the second integral is bounded away from 0 on $A_1$, a simple computation shows that $J(s\star u)\le C/\beta^\alpha$ for some $C>0$,  $\alpha = 4/(3p-10)$ and all $u\in A$, $s>0$. Note that $\alpha>0$ because $p>10/3$. Let $\overline A := \{s_u\star u: u\in A\}$ where $s_u$ is as in Lemma \ref{lem:phi}. Then $\overline A\in \Sigma$ and  $\gamma(\overline A) = \gamma(A)=m$, so $c_m(\beta) \le \max_{v\in\overline A}J(v)\to 0$ as $\beta\to\infty$ and the conclusion follows. 
\end{proof}

\begin{altproof}{Theorem \ref{th:mainK2}}
Choose $\beta_m$ such that $c_m(\beta)<c_0$ for $\beta>\beta_m$. 
Introduce a map $\wt J: \cS\to \R$ by setting
 $\wt J(u):=J(s_u\star u)$. Then
 $$
 \cS\ni u\mapsto s_u\star u\in \cS\cap\cM
 $$ 
 is a homeomorphism, and in a similar way as in \cite[Lemma 3.8]{BartschSoaveCalV}, 
 $\wt J$ is of class $\cC^1$ with 
 \begin{equation} \label{derivative}
 \wt J'(u)[v]=J'(s_u\star u)[s_u\star v]
 \end{equation}
 for every $u\in \cS$ and $v\in T_u\cS$ where $T_u\cS$ is the tangent space of $\cS$ at $u$. Now observe that
 \[
c_j(\beta) = \inf_{\substack{A\in\wt\Sigma\\ \gamma(A)\ge j}}\max_{u\in A} \wt J(u), \quad 1\le j\le m, 
 \]
 where 
 $$\wt 
 \Sigma := \{A\subset \cS  : A=-A\text{ and $A$ is compact}\}
 $$
 and  $u$ is a critical point of $\wt J$ if and only if $s_u\star u$ is a critical point of $J|_\cS$. 

Set $c_j:=c_j(\beta)$. According to \cite[Lemma 2.5]{MederskiSchino}, $c_1>0$ (this can also be proved by an easy adaptation of the proof of Lemma 4.2 below).  Let
\begin{gather*}
\wt J_a := \{u\in\cS: \wt J(u)\ge a\}, \quad \wt J^b := \{u\in\cS: \wt J(u)\le b\}, \quad \wt J_a^b := \wt J_a\cap \wt J^b, \\
K_{c_j} := \{u\in \cS: J(u)=c_j, \ J'|_{\cS}(u)=0\} \quad \text{and} \quad N_\delta(A) := \{u\in \cS: \text{dist}(u, A)<\delta\},
\end{gather*} 
where \text{dist} denotes the distance (we emphasize that in the definition of $K_{c_j}$ the functional is $J$ and not $\wt J$). If $u\in K_{c_j}$, then $u_1, u_2\ne 0$ because $0<c_j<c_0$. Hence it follows from Theorem \ref{th:PS} that $K_{c_j}$ is compact. Let $\delta>0$ be such that $\gamma(N_{2\delta}(K_{c_j})) = \gamma(K_{c_j})$. We claim that there exists $\eps_0>0$ such that if $u\in (\wt J_{c_j-\eps_0}^{c_j+\eps_0}\cap N_{\eps_0}(\cM))\setminus N_{\delta}(K_{c_j})$, then $\|\wt J'(u)\|\ge\eps_0$. Arguing by contradiction, we can find $u^n\notin  N_{\delta}(K_{c_j})$ such that $\wt J(u^n)\to c_j$, $\|u^n-z^n\|\to 0$ for some $z^n\in\cM$ and $\wt J'(u^n)\to 0$, where $\|\cdot\|$ denotes the norm in $H^1(\R^3)^2$. As $M(z^n) = 0$, $(z^n)$ and $(u^n)$ are bounded according to \eqref{eq:bound}. Since by \eqref{sstar},
\begin{equation*}
s_{z^n}^{\frac{3p-10}{2}}=\frac{|\nabla  z^n|_2^2}{ \frac{3(p-2)}{2p}\sum_{1\leq i, j\leq K}\beta_{i,j} \int_{\R^3}|z^n_i|^{p/2}|z^n_j|^{p/2}\,dx} = 1,
\end{equation*}
it is easy to see that $s_{u^n}\to 1$. Indeed, passing to a subsequence, $z^n\rightharpoonup z$ in $H^1_{rad}(\R^3)^K$ and $z^n\to z$ in $L^p(\R^3)^K$. We have 
\[
|\nabla z^n|_2 - |\nabla (u^n-z^n)|_2 \le  |\nabla u^n|_2 \le |\nabla z^n|_2 + |\nabla (u^n-z^n)|_2
\]
and hence
\begin{equation*}
s_{u^n}^{\frac{3p-10}{2}}=\frac{|\nabla  z^n|_2^2 + o(1)}{ \frac{3(p-2)}{2p}\sum_{1\leq i, j\leq K}\beta_{i,j} \int_{\R^3}|z^n_i|^{p/2}|z^n_j|^{p/2}\,dx + o(1)} \to 1.
\end{equation*}
Note that here $K=2$ and $\beta_{ij}=\beta$ but we shall need the equality above for a general $K$ in the next section.
Denote $v^n := s_{u^n}\star u^n$. Then $J(v^n)\to c_j$ and by \eqref{derivative}, $J|_\cS'(v^n)\to 0$.  Passing to a subsequence, $v^n\rightharpoonup v$ and according to Theorem \ref{th:PS}, either $v^n\to v\ne 0$ or one of the components of $v$, say $v_1$, equals zero. But then $J(0,v_2)=c_j$ and $v_2$ is a solution to \eqref{eq:ui}. This is impossible because $c_j<c_0$. Hence $v^n\to v$ and therefore $u^n\to u\in K_{c_j}$. Since $u\notin N_{\delta/2}(K_{c_j})$, we obtain a contradiction and the claim follows. 

Take $\eps\in(0,\eps_0/2)$ and let $\eta:\cS\times[0,1]\to\cS$ be the pseudo-gradient flow constructed in the deformation lemma \cite[Lemma 5.15]{Willem}. However, we need to make a small change in the definition of the sets $\cA$ and $\cB$ used in \cite{Willem} in the construction of the cutoff function $\psi$ related to the vector field $g$. Here we set $\cA := (\wt J_{c_j-2\eps}^{c_j+2\eps}\cap \cN_{\eps_0}(\cM))\setminus N_{\delta}(K_{c_j})$, $\cB := (\wt J_{c_j-\eps}^{c_j+\eps}\cap \cN_{\eps_0/2}(\cM))\setminus N_{2\delta}(K_{c_j})$ and we choose $\eps<\eps_0/2$ so that $8\eps/\delta<\eps_0$. The rest of the proof in \cite{Willem} remains unchanged and we get
\begin{equation} \label{deform}
\eta((\wt J^{c_j+\eps}\cap \cM)\setminus N_{2\delta}(K_{c_j}),1)\subset \wt J^{c_j-\eps}.
\end{equation}
We may also assume the vector field $g$, and hence $\eta$, is odd in $u$.

Now, let $A \in \wt\Sigma$ be such that $\gamma(A)\ge j$ and $\sup_{u\in A}\wt J(u) \le c_j+\eps$. Set $B := \{s_u\star u: u\in A\}$. Then $B\in\wt\Sigma$ and $\gamma(B)\ge j$. Moreover, $B\subset \cM$ and since $\wt J(u) = \wt J(s_u\star u)$, $\sup_{u\in B}\wt J(u) \le c_j+\eps$. By \eqref{deform}, $\eta(B\setminus N_{2\delta}(K_{c_j}),1)\subset \wt J^{c-\eps}$. Suppose $c_j=\cdots = c_{j+p}$ for some $p\ge 0$. Then by standard arguments, see e.g. \cite[Lemma II.5.6 and Theorem II.5.7]{Struwe}, $\gamma(K_{c_j})\ge p+1$. In particular, $K_{c_j}\ne\emptyset$.  If all $c_j$ are different, then to each $c_j$ there corresponds a solution $u^j$ (or an orbit of solutions to be more precise) and $J(u^1)<J(u^2)<\cdots< J(u^m)$. If $p>0$ for some $j$, then $\gamma(K_{c_j})\ge 2$. Hence $K_{c_j}$ is infinite and so is the number of (orbits of)  solutions. 

Finally, since $c_1(\beta) = \inf_{u\in\cS\cap\cM} J(u)$, we may replace the minimizer $u=(u_1,u_2)$ by $\bar u=(|u_1|,|u_2|)$. Then $\bar u$ is a ground state with positive components.
\end{altproof}

\section{Proof of Theorem \ref{th:mainK}}\label{sec:Main1}

Let $C_{p}>0$ denote the optimal constant in the {\em Gagliardo-Nirenberg inequality}
\begin{equation}\label{eq:GN}
|u|_p \leq C_{p} |\nabla u|_2^{\delta_p} |u|_2^{1-\delta_p}\quad\hbox{for }u\in H^1(\R^3),
\end{equation}
$\delta_p = 3 \big( \frac{1}{2} - \frac{1}{p} \big)$ and $\delta_pp>2$. 

Define
\begin{equation} \label{czero}
	c_0:=\min\big\{\inf_{\cM_\cI\cap\cS_\cI} J :\; \cI\subset\{1,\ldots,K\},\;1\leq |\cI|\leq K-1   \big\},
\end{equation}
where $|\cI|$ is the cardinality of $\cI$ and
\begin{eqnarray*}
	\cS_\cI&:=& \left\{ u \in H^1_{rad}(\R^3)^K  : \int_{\R^3} |u_i|^2 \, dx = \rho_i^2 \text{ if } i\in\cI\hbox{ and }u_i=0\text{ if } i\notin\cI\right\},\\
	\cM_\cI&:=& \left\{ u \in \cM:  u_i=0 \text{ if } i\notin\cI \right\}.
\end{eqnarray*}
Note that 
$$\inf_{\cM_\cI\cap\cS_\cI} J>0$$
for each $ \cI\subset\{1,\ldots,K\},\;1\leq |\cI|\leq K-1$ and
 $c_0>0$, see e.g. \cite[Lemma 2.3, Lemma 2.5]{MederskiSchino}.

Next we want to obtain a lower bound of $\inf_{\cM_\cI\cap\cS_\cI} J$.  In order to do this,  we need  the following inequalities.

\begin{Lem}\label{eq:nequi} For any $\alpha\geq 2$, $q\geq 1$, $m\ge 2$ and $a_1,a_2,\ldots,a_m\in\R^+$ there holds 
	\begin{eqnarray}\label{eq:nequi1}
		\Big(\sum_{i=1}^m a_i^{\alpha}\Big)^{1/\alpha}&\leq& 	\Big(\sum_{i=1}^m a_i^{2}\Big)^{1/2},\\
		\label{eq:nequi2}	\Big(\frac{1}{m(m-1)}\sum_{1\leq i\neq j\leq m} a_ia_j\Big)^{1/2}&\leq& \Big(\frac{1}{m}\sum_{i=1}^m a_i^{q}\Big)^{1/q}.
	\end{eqnarray}
	\end{Lem}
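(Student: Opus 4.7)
My plan is to prove the two inequalities separately; the first is just the monotonicity of $\ell^p$-norms on finite-dimensional spaces, while the second follows by first reducing to the $q=1$ case and then applying the power-mean (Jensen) inequality.

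\textbf{Inequality \eqref{eq:nequi1}.} Writing $S := \sum_{j=1}^m a_j^2$, I observe that for each $i$,
$$a_i^\alpha = (a_i^2)^{\alpha/2} = a_i^2 \cdot (a_i^2)^{(\alpha-2)/2} \leq a_i^2 \cdot S^{(\alpha-2)/2},$$
since $\alpha \geq 2$ and $a_i^2 \leq S$. Summing over $i$ gives $\sum_i a_i^\alpha \leq S^{\alpha/2}$, and taking the $\alpha$-th root yields \eqref{eq:nequi1}.

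\textbf{Inequality \eqref{eq:nequi2}, case $q=1$.} I would square both sides; it suffices to show
$$\frac{1}{m(m-1)}\sum_{i\ne j} a_i a_j \leq \frac{1}{m^2}\Bigl(\sum_{i=1}^m a_i\Bigr)^2 = \frac{1}{m^2}\Bigl(\sum_i a_i^2 + \sum_{i\ne j} a_i a_j\Bigr).$$
Clearing denominators, this reduces to $\sum_{i\ne j} a_i a_j \leq (m-1)\sum_i a_i^2$, which follows from the AM-GM bound $a_i a_j \leq \tfrac12(a_i^2+a_j^2)$, since summing over the $m(m-1)$ ordered pairs $i\ne j$ gives the factor $m-1$ on the right.

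\textbf{Inequality \eqref{eq:nequi2}, general $q\geq 1$.} The power-mean (or Jensen) inequality applied to the convex function $t\mapsto t^q$ gives
$$\frac{1}{m}\sum_{i=1}^m a_i \leq \Bigl(\frac{1}{m}\sum_{i=1}^m a_i^q\Bigr)^{1/q}.$$
Combining this with the $q=1$ case established above yields \eqref{eq:nequi2}.

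There is no real obstacle here; the only thing to be careful about is the bookkeeping in the AM-GM step (correctly counting that each index $i$ appears in $2(m-1)$ of the terms $a_i^2 + a_j^2$ with $i\ne j$) and invoking Jensen in the right direction. Equality cases, while not needed in the sequel, are worth noting: in \eqref{eq:nequi1} equality holds iff at most one $a_i$ is nonzero, and in \eqref{eq:nequi2} equality holds iff all $a_i$ are equal.
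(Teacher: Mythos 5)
Your proof is correct and follows essentially the same route as the paper: inequality \eqref{eq:nequi1} is the monotonicity of $\ell^p$-norms (which the paper simply cites from Hardy--Littlewood--P\'olya), and \eqref{eq:nequi2} rests on the same two ingredients the paper uses, namely the bound $\sum_{i\neq j}a_ia_j\leq(m-1)\sum_i a_i^2$ followed by the power-mean inequality. The only difference is that you supply short self-contained arguments for the two classical facts the paper quotes by reference; this changes nothing of substance.
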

	
\begin{proof}
Inequality \eqref{eq:nequi1} is a special case of \cite[(2.10.3)]{Hardy} (it holds for any $\alpha\ge \beta>0$ where $\beta$ replaces 2 on the right-hand side).

 In order to prove \eqref{eq:nequi2}, note that
$$
\sum_{1\leq i\neq j\leq m} a_ia_j\leq
(m-1)\sum_{i=1}^m a_i^2=(m-1)\Big(\sum_{i=1}^m a_i\Big)^2-(m-1)\sum_{1\leq i\neq j\leq m} a_ia_j.
$$
Hence
\[
\sum_{1\leq i\neq j\leq m}a_ia_j \le \frac{m-1}m\Big(\sum_{i=1}^m a_i\Big)^2
\]
and
\[
\sum_{1\leq i\neq j\leq m} a_ia_j\leq m(m-1)
\Big(\frac{1}{m}\sum_{i=1}^m a_i\Big)^{2}
\leq m(m-1)
\Big(\frac1m\sum_{i=1}^m a_i^{q}\Big)^{2/q}
\]
where the last inequality is a special case of \cite[(2.10.4)]{Hardy}. 
\end{proof}

\begin{Lem}\label{lem32}Let  $ \cI\subset\{1,\ldots,K\},\;1\leq |\cI|\leq K-1 $. There holds
$$
\inf_{\cM_\cI\cap\cS_\cI} J\geq 
\frac{3p-10}{6(p-2)} \Big[
\frac{3(p-2)}{2p}C_p^{p}\Big(\max_{i\in\cI}\{\beta_{i,i}\rho_i^{\frac{6-p}2}\}+\frac{|\cI|-1}{|\cI|^{\frac{3p-10}{4}}}\max_{i,j\in\cI,i\neq j}\{\beta_{i,j}(\rho_i\rho_j)^{\frac{6-p}4}\}\Big)\Big]^{-\frac{4}{3p-10}}.
$$
\end{Lem}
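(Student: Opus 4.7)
The plan is to rewrite $J$ on $\cM_\cI\cap\cS_\cI$ purely as a multiple of $\int_{\R^3}|\nabla u|^2\,dx$, and then to obtain a lower bound on this Dirichlet energy by combining the Nehari-Poho\v{z}aev identity with Gagliardo-Nirenberg and with the elementary inequalities of Lemma \ref{eq:nequi}.

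First, for $u\in\cM_\cI\cap\cS_\cI$, using $M(u)=0$ to eliminate the nonlinear term in $J(u)$ yields
\[
J(u)=\Big(\tfrac12-\tfrac{2}{3(p-2)}\Big)\int_{\R^3}|\nabla u|^2\,dx=\frac{3p-10}{6(p-2)}\int_{\R^3}|\nabla u|^2\,dx,
\]
so it suffices to bound $T:=\int_{\R^3}|\nabla u|^2\,dx=\sum_{i\in\cI}t_i$, with $t_i:=|\nabla u_i|_2^2$, from below. Using again $M(u)=0$, the Gagliardo-Nirenberg inequality \eqref{eq:GN} (note $p\delta_p=(3p-6)/2$ and $p(1-\delta_p)=(6-p)/2$), and the Cauchy-Schwarz bound $\int_{\R^3}|u_i|^{p/2}|u_j|^{p/2}\,dx\le|u_i|_p^{p/2}|u_j|_p^{p/2}$, I would estimate
\[
T\le \frac{3(p-2)}{2p}C_p^p\Big[\sum_{i\in\cI}\beta_{i,i}\rho_i^{\frac{6-p}{2}}t_i^{\alpha}+\sum_{\substack{i,j\in\cI\\ i\ne j}}\beta_{i,j}(\rho_i\rho_j)^{\frac{6-p}{4}}(t_it_j)^{\alpha/2}\Big],
\]
where $\alpha:=(3p-6)/4$.

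The key step is then to bound both sums by a constant times $T^\alpha$. For the diagonal part, since $\alpha\ge 1$ (because $p\ge 10/3$), the trivial estimate $t_i^\alpha\le t_iT^{\alpha-1}$ yields $\sum_{i\in\cI}t_i^\alpha\le T^\alpha$; this is where I can also cite \eqref{eq:nequi1} applied to $a_i:=t_i^{1/2}$ with exponent $2\alpha\ge 2$. For the off-diagonal part I would apply \eqref{eq:nequi2} with $a_i:=t_i^{\alpha/2}$ and the exponent $q:=2/\alpha$; this requires $q\ge 1$, i.e.\ $\alpha\le 2$, which is exactly the assumption $p\le \tfrac{14}{3}$. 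Then $a_i^q=t_i$, so $\tfrac1n\sum a_i^q=T/n$ (with $n:=|\cI|$), and \eqref{eq:nequi2} gives
\[
\sum_{\substack{i,j\in\cI\\ i\ne j}}(t_it_j)^{\alpha/2}\le n(n-1)\Big(\tfrac{T}{n}\Big)^{\alpha}=\frac{|\cI|-1}{|\cI|^{\alpha-1}}\,T^\alpha.
\]

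Substituting these into the previous display and using $\alpha-1=(3p-10)/4$, I obtain an inequality of the form $T\le B\cdot T^{\alpha}$ with $B$ equal to $\tfrac{3(p-2)}{2p}C_p^p$ times the bracket in the statement. Since $\alpha>1$, solving for $T$ gives $T\ge B^{-4/(3p-10)}$, and multiplication by $\tfrac{3p-10}{6(p-2)}$ finishes the proof. The main obstacle is simply to make the right choice of $q$ in \eqref{eq:nequi2}, which forces the restriction $p\le \tfrac{14}{3}$ appearing in Theorem \ref{th:mainK}; once $q=2/\alpha$ is chosen, the bookkeeping of exponents of $\rho_i$, $t_i$ and $|\cI|$ matches the statement exactly.
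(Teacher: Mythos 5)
Your argument is correct and is essentially the paper's own proof: the same reduction $J(u)=\frac{3p-10}{6(p-2)}|\nabla u|_2^2$ via $M(u)=0$, the same H\"older--Gagliardo--Nirenberg estimate of the nonlinear term, and the same applications of \eqref{eq:nequi1} (with exponent $\tfrac32(p-2)$) to the diagonal sum and of \eqref{eq:nequi2} with $q=\tfrac{8}{3(p-2)}$ (your $q=2/\alpha$) to the off-diagonal sum, which is exactly where the restriction $p\le\tfrac{14}3$ enters. The bookkeeping of exponents and the final step $T\le BT^{\alpha}\Rightarrow T\ge B^{-4/(3p-10)}$ match the paper.
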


\begin{proof}
By the H\"older inequality,
\[
\sum_{i,j\in \cI}\beta_{i,j}\int_{\R^3} |u_i|^{p/2}|u_j|^{p/2} \,dx \le  \sum_{i\in\cI}\beta_{i,i}|u_i|_p^p + \sum_{i,j\in \cI, i\ne j}\beta_{i,j}|u_i|_p^{p/2}|u_j|_p^{p/2}.
\]
Taking into account \eqref{eq:GN} and using \eqref{eq:nequi1} with $a_i = |\nabla u_i|_2$, $\alpha=\frac32(p-2)$ in the first term on the right-hand side above and \eqref{eq:nequi2} with $a_i=|\nabla u_i|_2^{\frac34(p-2)}$, $m=|\cI|$, $q=\frac{8}{3(p-2)}$ in the second term we get the  inequality 
\begin{eqnarray*}
&&\sum_{i,j\in\cI}\beta_{ij}\int_{\R^3} |u_i|^{p/2}|u_j|^{p/2} \,dx\\&&\leq\nonumber
C_p^{p}\Big(\max_{i\in\cI}\{\beta_{i,i}\rho_i^{\frac{6-p}2}\}|\nabla u|_2^{\frac32(p-2)}+\frac{|\cI|-1}{|\cI|^{\frac{3p-10}{4}}}\max_{i,j\in\cI,i\neq j}\{\beta_{i,j}(\rho_i\rho_j)^{\frac{6-p}4}\}|\nabla u|_2^{\frac32(p-2)}\Big)
\end{eqnarray*}
for  $u\in\cS_\cI$. Note that $p\le \frac{14}3$ is needed here in order to have $q\ge 1$.
Using \eqref{eq:M} we obtain
\begin{eqnarray*}
|\nabla u|_2^2&\leq &\frac{3(p-2)}{2p}C_p^{p}\Big(\max_{i\in\cI}\{\beta_{i,i}\rho_i^{\frac{6-p}2}\}+\frac{|\cI|-1}{|\cI|^{\frac{3p-10}{4}}}\max_{i,j\in\cI,i\neq j}\{\beta_{i,j}(\rho_i\rho_j)^{\frac{6-p}4}\}\Big)|\nabla u|_2^{\frac32(p-2)}
\end{eqnarray*}
for $u\in\cM_\cI\cap\cS_\cI$. Since $M(u)=0$, it follows using \eqref{eq:bound} that
\begin{eqnarray*}
J(u)&=&\frac{3p-10}{6(p-2)}|\nabla u|_2^2\\
&\geq& \frac{3p-10}{6(p-2)} \Big[
\frac{3(p-2)}{2p}C_p^{p}\Big(\max_{i\in\cI}\{\beta_{i,i}\rho_i^{\frac{6-p}2}\}+\frac{|\cI|-1}{|\cI|^{\frac{3p-10}{4}}}\max_{i,j\in\cI,i\neq j}\{\beta_{i,j}(\rho_i\rho_j)^{\frac{6-p}4}\}\Big)\Big]^{-\frac{4}{3p-10}}
\end{eqnarray*}
as claimed.
\end{proof}

As in the preceding section, let
\[
\Sigma := \{A\subset \cM\cap\cS : A=-A\text{ and $A$ is compact}\}.
\]
We shall show that 
\[
c_m(\boldsymbol\beta) := \inf_{\substack{A\in\Sigma\\\gamma(A)\ge m}}\max_{u\in A} J(u)<c_0
\]
for suitable $\boldsymbol\beta := \{\beta_{i,j}: 1\le i,j\le K\}$.

Let  $\cS_m$ denote the collection of all $(m-1)$-dimensional unit spheres in $L^2_{rad}(\R^3)$. We set
$$
\Theta_m:=\inf_{A\in\cS_m}\sup_{u\in A}\frac{\big(\int_{\R^3}|\nabla u|^2\,dx\big)^{\frac{3(p-2)}{3p-10}}}{\big(\int_{\R^3}|u|^p\,dx\big)^{\frac{4}{3p-10}}}.
$$

\begin{Lem} \label{lem:c_0_general} 
For any $m\geq 1$,
	$c_m(\boldsymbol\beta) < c_0$ provided that
\begin{align} \label{eq:c_0}
	&\max_{\substack{\cI\subset\{1,\ldots,K\}\\1\leq |\cI|\leq K-1}}\Big(\max_{i\in\cI}\{\beta_{i,i}\rho_i^{\frac{6-p}2}\}+\frac{|\cI|-1}{|\cI|^{\frac{3p-10}{4}}}\max_{i,j\in\cI,i\neq j}\{\beta_{i,j}(\rho_i\rho_j)^{\frac{6-p}4}\}\Big)\frac{\big(\sum_{i=1}^K\rho_i^2\big)^{\frac34(p-2)}}{\sum_{1\leq i, j\leq K}\beta_{i,j}(\rho_i\rho_j)^{p/2}} \\
\nonumber	&\qquad<\Theta_m^{-\frac{3p-10}{4}}C_p^{-p}.
\end{align}
Moreover, $\Theta_m^{-\frac{3p-10}{4}}C_p^{-p}\leq 1$ for all $m\geq 1$ and $\Theta_1^{-\frac{3p-10}{4}}C_p^{-p}=1$.
\end{Lem}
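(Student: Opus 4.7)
The strategy is to build a symmetric compact set $A\subset\cS\cap\cM$ of genus $m$ by embedding a near-optimal sphere from $\cS_m$ diagonally into the $K$ components and then projecting onto $\cM$, and to compare $\max_A J$ with the lower bound for $c_0$ supplied by Lemma~\ref{lem32}. I dispose of the ``moreover'' statements first: Gagliardo--Nirenberg \eqref{eq:GN} gives, for any $v\in L^2_{rad}(\R^3)$ with $|v|_2=1$, the inequality $|v|_p^p\le C_p^p|\nabla v|_2^{3(p-2)/2}$, which rearranges to show that the ratio defining $\Theta_m$ is at least $C_p^{-4p/(3p-10)}$ for every $v$ in every $A\in\cS_m$. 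Hence $\Theta_m\ge C_p^{-4p/(3p-10)}$, equivalently $\Theta_m^{-(3p-10)/4}C_p^{-p}\le 1$. For $m=1$ equality is attained by $\{\pm w\}$ with $w$ the $L^2$-normalized Gagliardo--Nirenberg extremizer (a rescaling of the Kwong ground state), giving $\Theta_1^{-(3p-10)/4}C_p^{-p}=1$.

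For the main assertion, fix $\epsilon>0$ and select $A_0\in\cS_m$ consisting of $H^1_{rad}$ functions (possible by density) such that
\[
\sup_{v\in A_0}\frac{(|\nabla v|_2^2)^{3(p-2)/(3p-10)}}{(\int_{\R^3}|v|^p\,dx)^{4/(3p-10)}}<\Theta_m+\epsilon.
\]
Put $\tilde A:=\{(\rho_1 v,\ldots,\rho_K v):v\in A_0\}\subset\cS$ and $A:=\{s_u\star u:u\in\tilde A\}\subset\cS\cap\cM$. Since $\vp(s)=J(s\star u)$ depends on $u$ only through $|\nabla u|_2^2$ and through $\int_{\R^3}|u_i|^{p/2}|u_j|^{p/2}\,dx$, one has $s_{-u}=s_u$, so $u\mapsto s_u\star u$ is odd and continuous; the image $A$ is therefore compact and symmetric, and $\gamma(A)=\gamma(A_0)=m$ since $A_0$ is an $(m-1)$-sphere.

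For $u=(\rho_1v,\ldots,\rho_Kv)$ a direct computation from \eqref{sstar}, using $|\nabla u|_2^2=(\sum_i\rho_i^2)|\nabla v|_2^2$ and $\sum_{i,j}\beta_{i,j}\int_{\R^3}|u_i|^{p/2}|u_j|^{p/2}\,dx=(\sum_{i,j}\beta_{i,j}(\rho_i\rho_j)^{p/2})|v|_p^p$, yields
\[
J(s_u\star u)=\kappa\,\frac{(\sum_i\rho_i^2)^{3(p-2)/(3p-10)}}{(\sum_{i,j}\beta_{i,j}(\rho_i\rho_j)^{p/2})^{4/(3p-10)}}\cdot\frac{(|\nabla v|_2^2)^{3(p-2)/(3p-10)}}{|v|_p^{4p/(3p-10)}},
\]
with $\kappa:=\tfrac{3p-10}{6(p-2)}\bigl(\tfrac{2p}{3(p-2)}\bigr)^{4/(3p-10)}$. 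Lemma~\ref{lem32}, after minimizing over $\cI$, gives $c_0\ge\kappa\,C_p^{-4p/(3p-10)}\Lambda_{\max}^{-4/(3p-10)}$, where $\Lambda_{\max}$ denotes the maximum on the left-hand side of \eqref{eq:c_0}. Cancelling $\kappa$, raising to the power $(3p-10)/4$ and rearranging, the inequality $\max_AJ<c_0$ reduces precisely to
\[
\Lambda_{\max}\cdot\frac{(\sum_i\rho_i^2)^{3(p-2)/4}}{\sum_{i,j}\beta_{i,j}(\rho_i\rho_j)^{p/2}}<(\Theta_m+\epsilon)^{-(3p-10)/4}C_p^{-p},
\]
which is \eqref{eq:c_0} in the limit $\epsilon\downarrow 0$; choosing $\epsilon$ small enough gives $c_m(\boldsymbol\beta)\le\max_AJ<c_0$. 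The main obstacle is purely bookkeeping --- verifying that the constants $\kappa$ and the exponents produced independently by the upper bound on $\max_AJ$ and by the lower bound for $c_0$ combine cleanly so that the sharp condition \eqref{eq:c_0} emerges; a secondary point is the density of $H^1_{rad}$ in $L^2_{rad}$ used to realize $\Theta_m$ by $H^1_{rad}$-spheres.
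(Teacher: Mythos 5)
Your proposal is correct and follows essentially the same route as the paper: a diagonal embedding $v\mapsto(\rho_1v,\ldots,\rho_Kv)$ of a sphere from $\cS_m$, projection onto $\cM$ via $u\mapsto s_u\star u$, the explicit computation of $J(s_u\star u)$ from \eqref{sstar}, and comparison with the lower bound of Lemma \ref{lem32}; the only cosmetic differences are that you work with an $\epsilon$-near-optimal sphere rather than taking the infimum over all of $\cS_m$ at the end, and your ``by density'' aside is unnecessary (spheres containing non-$H^1_{rad}$ or $L^p$-null elements simply contribute $+\infty$ to the supremum defining $\Theta_m$). The constants and exponents you record ($\kappa$ and the reduction of $\max_A J<c_0$ to \eqref{eq:c_0}) agree with the paper's computation.
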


\begin{proof}
Let $A_1\in\cS_m$ and $\widehat{A}:= \{(\rho_1v, \rho_2v, \ldots, \rho_Kv): v\in A_1\}\subset \cS$.  Since $\widehat{A}$ is homeomorphic to $A_1$, $\gamma(\widehat A)=m$. 
In view of  Lemma \ref{lem:phi}  we may define
	$$
	\widetilde A := \{s_u\star u:\; u\in \widehat A\}.
	$$
	Observe that $\widetilde A\in\Sigma$ and $\gamma(\widetilde A) = m$.
	For $v\in A_1$ we take $u:=(\rho_1 v,\ldots, \rho_K v)\in \widehat A$ and $s_u\star u\in\widetilde A$. By \eqref{sstar}, 
	$$
	s_u^{\frac{3p-10}{2}}=\frac{|\nabla  u|_2^2}{ \frac{3(p-2)}{2p}\sum_{1\leq i, j\leq K}\beta_{i,j} \int_{\R^3}|u_i|^{p/2}|u_j|^{p/2}\,dx}
	$$
Hence using \eqref{eq:bound} and $M(u)=0$, 
	\begin{eqnarray*}
		J(s_u\star u)
		&=&\frac{3p-10}{6(p-2)} s_u^2|\nabla u|_2^2=\frac{3p-10}{6(p-2)}\, \frac{|\nabla u|_2^{\frac{6(p-2)}{3p-10}}}{\big( \frac{3(p-2)}{2p}.\sum_{1\leq i, j\leq K}\beta_{i,j} \int_{\R^3}|u_i|^{p/2}|u_j|^{p/2}\,dx\big)^{\frac{4}{3p-10}}}\\
		&=&\frac{3p-10}{6(p-2)}\, \frac{\big(\sum_{i=1}^K\rho_i^2\big)^{\frac{3(p-2)}{3p-10}}|\nabla v|_2^{\frac{6(p-2)}{3p-10}}}{\big( \frac{3(p-2)}{2p}\big)^{\frac{4}{3p-10}}\big(\sum_{1\leq i, j\leq K}\beta_{i,j}(\rho_i\rho_j)^{p/2}\big)^{\frac{4}{3p-10}}\big( \int_{\R^3}|v|^{p}\,dx\big)^{\frac{4}{3p-10}}}
	\end{eqnarray*}
and
	\begin{eqnarray*}
	c_m(\boldsymbol\beta) &\leq&
	\sup_{u\in \widetilde A} J(u)= \sup_{u\in\widehat A} J(s_u\star u) \\
& = & \sup_{v\in A_1}\frac{\big(\int_{\R^3}|\nabla v|^2\,dx\big)^{\frac{3(p-2)}{3p-10}}}{\big(\int_{\R^3}|v|^p\,dx\big)^{\frac{4}{3p-10}}} \,\frac{3p-10}{6(p-2)}\, \frac{\big(\sum_{i=1}^K\rho_i^2\big)^{\frac{3(p-2)}{3p-10}}}{\big( \frac{3(p-2)}{2p}\big)^{\frac{4}{3p-10}}\big(\sum_{1\leq i, j\leq K}\beta_{i,j}(\rho_i\rho_j)^{p/2}\big)^{\frac{4}{3p-10}}}.
\end{eqnarray*}
Since this holds for all $A_1\in\cS_m$, it follows that 
$$
c_m(\boldsymbol\beta)\leq \Theta_m\, \frac{3p-10}{6(p-2)}\, \frac{\big(\sum_{i=1}^K\rho_i^2\big)^{\frac{3(p-2)}{3p-10}}}{\big( \frac{3(p-2)}{2p}\big)^{\frac{4}{3p-10}}\big(\sum_{1\leq i, j\leq K}\beta_{i,j}(\rho_i\rho_j)^{p/2}\big)^{\frac{4}{3p-10}}}.
$$
By \eqref{eq:c_0},
\[
\Big[C_p^{p}\Big(\max_{i\in\cI}\{\beta_{i,i}\rho_i^{\frac{6-p}2}\}+\frac{|\cI|-1}{|\cI|^{\frac{3p-10}{4}}}\max_{i,j\in\cI,i\neq j}\{\beta_{i,j}(\rho_i\rho_j)^{\frac{6-p}4}\}\Big)\Big]^{-\frac{4}{3p-10}} > \Theta_m \frac{\big(\sum_{i=1}^K\rho_i^2\big)^{\frac{3(p-2)}{3p-10}}}{\big(\sum_{1\leq i, j\leq K}\beta_{i,j}(\rho_i\rho_j)^{p/2}\big)^\frac4{3p-10}},
\]
and in view of Lemma \ref{lem32},
\[
\inf_{\cM_\cI\cap\cS_\cI} J > \Theta_m \frac{3p-10}{6(p-2)} \frac{\big(\sum_{i=1}^K\rho_i^2\big)^{\frac{3(p-2)}{3p-10}}}{\big( \frac{3(p-2)}{2p}\big)^{\frac{4}{3p-10}}\big(\sum_{1\leq i, j\leq K}\beta_{i,j}(\rho_i\rho_j)^{p/2}\big)^{\frac{4}{3p-10}}} \ge c_m(\boldsymbol\beta).
\]
for each $ \cI\subset\{1,\ldots,K\},\;1\leq |\cI|\leq K-1$. Hence, recalling the definition \eqref{czero}, $c_m(\boldsymbol\beta) < c_0$.

Observe that $\Theta_m^{-\frac{3p-10}{4}}C_p^{-p}\leq 1$ follows from the Gagliardo-Nirenberg inequality \eqref{eq:GN}.  If $m=1$, then we may take
$$A_1:=\{u,-u\},$$
where $u\in H^1_{rad}(\R^3)$ is an optimizer for \eqref{eq:GN} and $|u|_2=1$. That such $u$ exists is a consequence of \cite[Corollaries 2.1 and 2.2]{Weinstein}. This shows that 
$\Theta_1=C_p^{-\frac{4p}{3p-10}}$.	
\end{proof}

\begin{altproof}{Theorem \ref{th:mainK}}
Since $\Theta_1^{-\frac{3p-10}4}C_p^{-p}=1$, \eqref{eq:c_0} is equivalent to \eqref{eq:betacond}. Hence for $\boldsymbol\beta$ such that  \eqref{eq:betacond} holds, we have $c_1(\boldsymbol\beta)<c_0$. Now the argument is exactly the same as in the proof of Theorem \ref{th:mainK2} (in fact a little simpler because only $c_1(\boldsymbol\beta)$ is of interest here).
\end{altproof}

\begin{Rem}
\emph{
As we have already mentioned in the introduction, if $K=2$ and $m\ge 1$, then \eqref{eq:c_0} holds for all $\beta$ large enough ($\beta>\beta_m$) because $|\cI|-1=0$. Looking into the proof of Lemma \ref{lem32} we also see that the restriction $p\le\frac{14}3$ is not necessary in this case. Hence we have reproved Theorem \ref{th:mainK2}. However, since our argument in Section \ref{sec:Main2} is considerably simpler, we preferred to treat the case $K=2$ separately. If $K>2$ and \eqref{eq:c_0} holds for some $m$, then the system \eqref{eq} either has  infinitely many solutions or at least $m$ solutions at different energy levels. However, we do not know whether \eqref{eq:c_0} can hold for $K>2$ and $m>1$.
}
\end{Rem}

\section{Nonexistence of ground states}\label{sec:Main3}

Denote $J_i(v) := J(0,\ldots,v,\ldots,0)$ and 
	\begin{eqnarray*}
	\cS_{i}&:=&\Big\{u\in H^1_{rad}(\R^3):\; \int_{\R^3}|u|^2\,dx=\rho_i^2\Big\},\\
		\cM_i&:=&\big\{u\in H^1_{rad}(\R^3): M_i(u) := M_i(0,\dots,u,\ldots,0)=0\big\}
	\end{eqnarray*}
for $i=1,\ldots,K$.

\begin{altproof}{Theorem \ref{th:nonexistence}}
We prove the nonexistence of radial ground states. The proof in the non-radial case is exactly the same except that in the definitions of $\cS,\cM,\cS_i,\cM_i$ one needs to replace $H^1_{rad}(\R^3)$ with $H^1(\R^3)$. 

 As we have mentioned earlier, by the uniqueness result of Kwong \cite{Kwong} the equation
 \begin{equation*}
\left\{ \begin{array}{l}
-\Delta u + \lambda u = \beta_{i,i}|u|^{p-2}u, \\
\int_{\R^3} u^2 \, dx = \rho_i^2
\end{array} \right.
\end{equation*}
has a unique positive ground state solution $u_i:=u_{\beta_{i,i}}$ which is in $H^1_{rad}(\R^3)$. Thus 
$$
J(u_i)=\inf_{\cM_i\cap \cS_{i}}J_i.
$$
Denote $c_i=J_i(u_i)$ and assume without loss of generality that $c_1\le c_i$ for all $i$. We claim that $c_1$ is the ground state level for \eqref{eq} and that it is not attained. To prove this we first show that no level $c\le c_1$ is attained by $J$ on $\cM\cap\cS$ and then exhibit a sequence $(v^n) \subset \cM\cap\cS$ such that $J(v^n)\to c_1$.

Choose $w=(w_1,\ldots,w_K)\in \cM\cap\cS$. Then (cf. \eqref{eq:M})
\[
0 = M(w) \ge M_1(w_1) + \cdots + M_K(w_K)
\]
because $\beta_{i,j}<0$ for all $i\ne j$.
Hence at least one of $M_i(w_i)$ is $\le 0$. Using \eqref{eq:bound}, $M(w)=0$ and the equality $\int_{\R^3}|\nabla(s\star w)|^2\,dx = s^2\int_{\R^3}|\nabla w|^2\,dx$, we obtain 
\[
J(s_w\star w) = \frac{3p-10}{6(p-2)}s_w^2 \int_{\R^3} |\nabla w|^2\,dx = \frac{3p-10}{6(p-2)}\int_{\R^3} (|\nabla w_1|^2+\cdots+|\nabla w_K|^2)\,dx
\]
($s_w=1$ because $w\in\cM\cap\cS$). Likewise,
\[
J_i(s_{w_i}\star w_i) = \frac{3p-10}{6(p-2)} s_{w_i}^2\int_{\R^3} |\nabla w_i|^2\,dx, \quad i=1,\ldots, K.
\]
Note that in order that $s_w$ and $s_{w_i}$ exist it is necessary to have $p>\frac{10}3$.
Choose $i$ with $M_i(w_i)\le 0$. Since $M_i(s\star w_i) >0$ for $s<s_{w_i}$ and $<0$ for $s>s_{w_i}$, we have $s_{w_i}\le 1$ and since $s_{w_i}\star w_i\in \cM_i\cap \cS_i$,  
\begin{align*}
J(w) & = \frac{3p-10}{6(p-2)}\int_{\R^3} |\nabla w|^2\,dx > \frac{3p-10}{6(p-2)}\int_{\R^3} |\nabla w_i|^2\,dx \\ 
& \ge \frac{3p-10}{6(p-2)} s_{w_i}^2\int_{\R^3} |\nabla w_i|^2\,dx \ge c_i \ge c_1.
\end{align*}
So no level $c\le c_1$ is attained by $J$ on $\cM\cap\cS$.

Let $u^n := (u_1,s_n\star u_2,\ldots,s_n\star u_K)$ where $s_n\to 0^+$. Then $u^n\in\cS$, $|\nabla (s_n\star u_i)|_2\to 0$ and $|s_n\star u_i|_p\to 0$ for $2\le i\le K$. Set $u_1^n:=u_1$ and  $u_i^n := s_n\star u_i$ for $2\le i\le K$. Since $|u_i^n|_p\to 0$ for $2\le i\le K$, 
\[
\sum_{1\leq i, j\leq K}\beta_{i,j} \int_{\R^3}|u_i^n|^{p/2}|u_j^n|^{p/2}\,dx >0
\]
for $n$ large enough. For such $n$, $s_{u^n}$ exists, $s_{u^n}\star u^n\in \cM\cap\cS$  and using \eqref{sstar} we obtain
\[
s_{u^n}^{\frac{3p-10}2} = \frac{2p}{3(p-2)}\,\frac{\int_{\R^3}|\nabla u^n|^2\,dx}{\sum_{1\leq i, j\leq K}\beta_{i,j} \int_{\R^3}|u_i^n|^{p/2}|u_j^n|^{p/2}\,dx} \to \frac{2p}{3(p-2)}\, \frac{\int_{\R^3}|\nabla u_1|^2\,dx}{\beta_{1,1}\int_{\R^3}|u_1|^p\,dx} = s_{u_1}^{\frac{3p-10}2} = 1
\]
because $u_1\in \cM_{1}\cap\cS_1$. It follows that 
\[
J(s_{u^n}\star u^n) = \frac{3p-10}{6(p-2)}s_{u^n}^2\int_{\R^3} |\nabla u^n|^2\,dx \to \frac{3p-10}{6(p-2)}\int_{\R^3}|\nabla u_1|^2\,dx = c_1.
\]
This completes the proof.
\end{altproof}

\appendix

\section{A Liouville-type lemma}\label{spectrum}

Let $V\in L^\infty_{loc}(\R^N\setminus B(0,\rho))$ and let $A:=-\Delta +V$ be a Schr\"odinger operator on $L^2\big(\R^N\setminus \overline{B(0,\rho)}\big)$. We assume $V$ satisfies the condition
\begin{itemize}
	\item[(V)] $V(x)\leq \frac{N(4-N)}{4} |x|^{-2}$ for $|x|\geq \rho$.
\end{itemize}

\begin{Lem}\label{lemLu}
	Let $u\in H^1(\R^N)$ and $N\in\{3,4\}$.
	Suppose that $A u\geq 0$, $u\geq 0$ and $(V)$ hold for $x\in \R^N\setminus B(0,\rho)$. Then 
	$\inf\{u(x): |x|=M\}=0$ for any $M\geq \rho$. If, in addition, $u\in \cC^1(\R^N\setminus \overline{B(0,\rho)})$, then $u=0$ .
\end{Lem}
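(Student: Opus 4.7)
The strategy will be a ground-state transformation together with a Caccioppoli-type estimate exploiting the $H^1$ integrability of $u$. I would introduce the positive weight $w(x):=|x|^{-N/2}$, which is smooth on $\R^N\setminus\{0\}$, and the critical potential $V_0(x):=\frac{N(4-N)}{4}|x|^{-2}$. A direct calculation shows $-\Delta w + V_0 w = 0$ on $\R^N\setminus\{0\}$; hypothesis (V) then gives $-\Delta w + Vw = -(V_0-V)w\leq 0$, so $w$ is a smooth positive supersolution for $A$ on $\R^N\setminus \overline{B(0,\rho)}$. Writing $u=w\psi$ with $\psi := u|x|^{N/2}\geq 0$ in $H^1_{\loc}(\R^N\setminus\overline{B(0,\rho)})$, I would test the weak inequality $Au\geq 0$ against the admissible non-negative test functions $\phi = w\eta$, for $\eta\in \cC^\infty_c(\R^N\setminus\overline{B(0,\rho)})$ with $\eta\geq 0$. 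Using the identity
$$\psi|\nabla w|^2 + w\,\nabla w\cdot\nabla\psi = \operatorname{div}(\psi w\nabla w) - \psi w\Delta w$$
and integrating by parts, the $w$-dependent terms telescope into $\int\eta w\psi(-\Delta w + Vw)\,dx \leq 0$, and what remains is the weighted inequality
$$\int_{\R^N} w^2\,\nabla\psi\cdot\nabla\eta\,dx \;\geq\; \int_{\R^N}\eta\,w^2\psi\,(V_0-V)\,dx \;\geq\; 0.$$
Thus $\psi$ is weakly $w^2$-superharmonic on $\R^N\setminus\overline{B(0,\rho)}$, and by density the same holds for every non-negative $\eta\in H^1_0$ of compact support.

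Next I would argue by contradiction: assume $a:=\inf\{u(x):|x|=M\}>0$ for some $M\geq\rho$. In trace sense $\psi\geq c:=aM^{N/2}$ on $\partial B(0,M)$, so $g:=(\psi-c)_-\in H^1_{\loc}$ is non-negative, bounded by $c$, and has vanishing trace on $\partial B(0,M)$. Fix a smooth cutoff $\chi_R$ with $\chi_R\equiv 1$ on $B(0,R)$, $\chi_R\equiv 0$ outside $B(0,2R)$, and $|\nabla\chi_R|\leq 2/R$. Plugging $\eta := g\chi_R^2$ into the weighted inequality, noting $\nabla\psi = -\nabla g$ on $\{g>0\}$, and applying Cauchy-Schwarz would give the Caccioppoli-type estimate
$$\int \chi_R^2\,w^2|\nabla g|^2\,dx \;\leq\; 4\int w^2 g^2|\nabla\chi_R|^2\,dx \;\leq\; \frac{16c^2}{R^2}\int_{R\leq|x|\leq 2R}|x|^{-N}\,dx \;=\; \frac{16c^2|S^{N-1}|\ln 2}{R^2},$$
which tends to $0$ as $R\to\infty$. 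Hence $\nabla g = 0$ a.e.\ on the connected set $\{|x|>M\}$, so $g$ is a.e.\ equal to a non-negative constant there; its vanishing trace on $\partial B(0,M)$ forces $g\equiv 0$, i.e.\ $\psi\geq c$ on $\{|x|>M\}$. This yields $u(x)\geq c|x|^{-N/2}$, so $\int_{|x|>M}u^2\,dx \geq c^2\int_{|x|>M}|x|^{-N}\,dx = +\infty$, contradicting $u\in L^2(\R^N)$. Consequently $\inf\{u(x):|x|=M\}=0$ for every $M\geq\rho$.

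For the final assertion, when $u\in\cC^1(\R^N\setminus\overline{B(0,\rho)})$ the infimum on each sphere is attained at an interior point of the connected exterior, so the zero set of $u$ is non-empty there; since $u\geq 0$ satisfies $-\Delta u + Vu\geq 0$ with $V\in L^\infty_{\loc}$, the strong minimum principle (equivalently, the weak Harnack inequality for non-negative supersolutions) forces $u$ to vanish in a neighborhood of each such zero. The zero set is therefore open, closed, and non-empty in the connected set $\R^N\setminus\overline{B(0,\rho)}$, hence coincides with it, and $u\equiv 0$.

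I expect the main obstacle to lie in the first step: the precise value $N(4-N)/4$ appearing in (V) is exactly what makes $w=|x|^{-N/2}$ a supersolution for $A$, so that the sign-indefinite cross terms in the computation of $\nabla u\cdot\nabla(w\eta) + Vu(w\eta)$ absorb cleanly into a non-negative right-hand side. Once the weighted superharmonicity is in place, the remainder is a by-now standard Caccioppoli estimate exploiting the borderline logarithmic non-integrability $\int_R^{2R}r^{-1}\,dr=\ln 2$ of $|x|^{-N}$ at infinity, which balances perfectly the scaling of the weight $w^2$; the restriction $N\in\{3,4\}$ enters exactly here, as it guarantees that (V) is compatible with non-negative (or slightly positive) $V$ near infinity.
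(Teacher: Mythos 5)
Your proof is correct, and at its core it is the same comparison argument as the paper's: both proofs hinge on the barrier $|x|^{-N/2}$, singled out precisely because the constant $\frac{N(4-N)}{4}$ in (V) makes it annihilated by $-\Delta+V_0$, and both conclude by forcing $u\gtrsim|x|^{-N/2}$ on an exterior region and contradicting $u\in L^2(\R^N)$. The implementations differ: the paper subtracts the barrier directly, setting $v=c\rho^{N/2}|x|^{-N/2}$, observing $(-\Delta+V^+)v\le 0$ and testing $(-\Delta+V^+)(u-v)\ge 0$ with $(u-v)^-\eta_R$ over unit-width annuli, which yields that $(u-v)^-$ is constant (not necessarily zero) and still gives $u\ge v+c_1\notin L^2$; you instead perform a ground-state substitution $u=w\psi$, derive the weighted inequality $\int w^2\nabla\psi\cdot\nabla\eta\,dx\ge 0$, and run a dyadic Caccioppoli estimate on $(\psi-c)_-$, using the vanishing trace on $\partial B(0,M)$ to force it to be identically zero. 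Your route absorbs the potential cleanly (no need to pass to $V^+$, which the paper uses) and isolates the borderline logarithmic divergence $\int_R^{2R}r^{-1}\,dr$ transparently; the paper's route avoids the product-rule bookkeeping and the density argument for $H^1_0$ test functions. Two small points: your $w$ satisfies $Aw\le 0$, so it is a \emph{subsolution}, not a supersolution as you write --- the sign in your displayed weighted inequality nevertheless comes out right because it is $u\ge 0$ (hence $w\psi\eta\ge 0$) that lets the term $\int(-\Delta w+Vw)\,w\psi\eta\,dx$ be discarded; and your closing remark that $N\in\{3,4\}$ is what makes the Caccioppoli step work is not quite accurate --- the argument runs in any dimension, the restriction only keeps hypothesis (V) compatible with potentials satisfying $V(x)|x|^2\to 0$, which is how the lemma is invoked in Section 2.
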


Note that if $N=4$, then $V^+ = 0$ in $\R^N\setminus B(0,\rho)$.

\begin{proof}
	Suppose that $c:=\inf\{u(x): |x|=M\}>0$ for some $M\geq \rho$. We may assume $M=\rho$.
	Let $v(x):=c\rho^{\alpha}|x|^{-\alpha}$, and note that
	$\int_{|x|\geq\rho}|v|^2\,dx =\infty$, $\lim_{R\to\infty}\int_{R\leq |x|\leq R+1}|v|^2\,dx =0$ and $\lim_{R\to\infty}\int_{R\leq |x|\leq R+1}|\nabla v|^2\,dx =0$ provided $\frac{N-1}{2}<\alpha\leq \frac{N}{2}$. Choosing $\alpha = \frac N2$, we have
	\begin{eqnarray*}
		(-\Delta +V^+) v &=& c\rho^{\alpha}\big(-\alpha(\alpha+2-N)|x|^{-\alpha-2}+V^+(x)|x|^{-\alpha}\big)\\
		&\leq&
		c\rho^{\alpha}\Big(-\alpha(\alpha+2-N) +  \frac{N(4-N)}{4} \Big)|x|^{-\alpha-2}=0.
	\end{eqnarray*}
Therefore $(-\Delta +V^+)(u-v)\geq A u-(-\Delta +V^+)v\geq 0$ in $\R^N\setminus B(0,\rho)$ and $u-v\geq 0$ for $|x|=\rho$. Let $R>\rho$ and  $\eta_R\in \cC_0^{\infty}(\R^N)$ be a nonnegative function such that $|\nabla \eta_R|\leq C$, $\eta_R(x)=1$ for $x\in B(0,R)$ and $\eta_R(x)=0$ for $x\in \R^N\setminus B(0,R+1)$.  Testing the inequality $(-\Delta+V^+)(u-v)\ge 0$ with $(u-v)^-\eta_R\le 0$, we obtain 
	$$(-\Delta +V^+)(u-v)(u-v)^-\eta_R\leq 0$$
	 for $x\in \R^N\setminus B(0,\rho)$. Hence
	\begin{align}
	\label{ineq}	&  \int_{\R^N\setminus B(0,\rho)} \big(|\nabla (u-v)^-|^2+V^+(x)|(u-v)^-|^2\big)\eta_R\,dx\leq -\int_{\R^N\setminus B(0,\rho)} (\nabla (u-v)^- \nabla\eta_R)(u-v)^-\, dx\\
		\nonumber & \qquad\leq \frac12\int_{\R^N\setminus B(0,\rho)} \big(|\nabla (u-v)^-|^2+ |(u-v)^-|^2\big)|\nabla\eta_R|\, dx.
	\end{align}
 Since $\lim_{R\to\infty}\int_{R\leq |x|\leq R+1}|v|^2\,dx=\lim_{R\to\infty}\int_{R\leq |x|\leq R+1}|\nabla v|^2\,dx =0$, we obtain
	\begin{align}
		\label{ineq2} & \int_{\R^N\setminus B(0,\rho)} \big(|\nabla (u-v)^-|^2+ |(u-v)^-|^2\big)|\nabla\eta_R|\, dx \\
		\nonumber & \qquad \leq C\int_{B(0,R+1)\setminus B(0,R)} |\nabla (u-v)|^2+ |u-v|^2\, dx
		\to 0 \text{ as } R\to\infty.
	\end{align}
	It follows from \eqref{ineq}, \eqref{ineq2} and the Lebesgue dominated convergence theorem that
	$$\int_{\R^N\setminus B(0,\rho)}|\nabla (u-v)^-|^2\,dx=0,$$
	so $c_1:=(u-v)^-$ is constant for $|x|>\rho$. Note that
	$u-v=(u-v)^++c_1\geq c_1$, thus $u\geq v+c_1$ and
	$$\int_{|x|\geq \rho}|u|^2\,dx\geq \int_{|x|\geq \rho}|v+c_1|^2\,dx =\infty.$$
We get a contradiction since $u\in L^2(\R^N)$. Therefore	$\inf\{u(x): |x|=M\}=0$ for any $M\geq \rho$, hence $u\geq 0$. If $u\in \cC^1(\R^N\setminus \overline{B(0,\rho)})$, then $u=0$ on $\R^N\setminus \overline{B(0,\rho)}$ by the maximum principle.
\end{proof}

{\bf Acknowledgements.}
The authors would like to thank Louis Jeanjean for helpful comments and the referee of an earlier version of this paper for pointing out a gap in a proof and suggesting an improvement of Theorem \ref{th:PS}.

J. Mederski was partly supported by the National Science Centre, Poland (Grant No. 2020/37/B/ST1/02742).

{\bf Conflict of interest.}
 On behalf of all authors, the corresponding author states that there is no conflict of interest.
 
 {\bf Data availability statement.} No data were generated or analysed as part of this manuscript.

\end{document}